\newtheorem{thm}{Theorem}[section]
\newtheorem{remark}{Remark}[section]
\begin{document}

\begin{frontmatter}

\title{Enforcing exact physics in scientific machine learning: a data-driven exterior calculus on graphs}
\author[1]{Nathaniel Trask}
\ead{natrask@sandia.gov}
\author[2]{Andy Huang}
\author[3]{Xiaozhe Hu}

\cortext[cor1]{Corresponding author}
\address[1]{Center for Computing Research, Sandia National Laboratories}
\address[2]{Radiation and Electrical Science, Sandia National Laboratories}
\address[3]{Department of Mathematics, Tufts University}

\begin{abstract}
    As traditional machine learning tools are increasingly applied to science and engineering applications, physics-informed methods have emerged as effective tools for endowing inferences with properties essential for physical realizability. While promising, these methods generally enforce physics weakly via penalization. To enforce physics strongly, we turn to the exterior calculus framework underpinning combinatorial Hodge theory and physics-compatible discretization of partial differential equations (PDEs). Historically, these two fields have remained largely distinct, as graphs are strictly topological objects lacking the metric information fundamental to PDE discretization. We present an approach where this missing metric information may be learned from data, using graphs as coarse-grained mesh surrogates that inherit desirable conservation and exact sequence structure from the combinatorial Hodge theory. The resulting data-driven exterior calculus (DDEC) may be used to extract structure-preserving surrogate models with mathematical guarantees of well-posedness. The approach admits a PDE-constrained optimization training strategy which guarantees machine-learned models enforce physics to machine precision, even for poorly trained models or small data regimes. We provide analysis of the method for a class of models designed to reproduce nonlinear perturbations of elliptic problems and provide examples of learning $H(div)/H(curl)$ systems representative of subsurface flows and electromagnetics. 
\end{abstract}


\end{frontmatter}

\section{Introduction}

Network models of physical systems are ubiquitous throughout the sciences and engineering. The electronic circuit models typically learned in first year undergraduate physics may similarly be used to describe fluid, mechanical or heat transfer systems in corresponding hydraulic circuit, mass-spring-damper, or thermal circuit analogies, respectively \cite{weyl-1923,wilf-hilary-1972,firestone1933new,koenig1960linear,bloch1945electromechanical,smith2002synthesis,chen2015electrical,oh2012design}. Such network models represent discrete representations of conservation laws corresponding to classical control volume analysis and their physical properties are intricately tied to the underlying graph topology \cite{smale-1972,bamberg-sternberg-1991}, allowing modeling of a number of thermodynamic principles \cite{breedveld1985multibond}.  In contemporary machine learning (ML), graph neural networks (GNNs) attach a more "black-box" message passing model to a network, inferring “object-relation”/causal relationships from unstructured data to obtain data representations and model pair-wise interactions \cite{gori2005new,hamilton2017inductive,hamilton2017representation,battaglia2016interaction,chang2016compositional}. Generally however, applications of GNNs have not focused upon preserving physical structures in their network topology, although some recent works have considered how the interplay between microstructure and mechanics can be encoded via graphs \cite{vlassis2020geometric}.

In this work we present a framework to extract efficient data-driven network models which exactly preserve desirable mathematical structures of the underlying physics. The framework we introduce is general, but we focus on a particular application in which we assume access to high-fidelity data and aim to learn a low dimensional network as an efficient structure-preserving surrogate model. The resulting model bears similarities to reduced-order models (ROMs), however the control volume analysis pursued in the current approach may provide advantages in preserving structural properties that have proven challenging for the variational derivation underpinning ROMs \cite{quarteroni2015reduced,ohlberger2015reduced}.

Solutions of PDEs generally rely upon computational meshes partitioning space into disjoint curvilinear cells. The shared topological structure offered by both networks and meshes allow a unified analysis in terms of the exterior calculus. For PDEs, the discrete exterior calculus (DEC) and related spatially compatible discretizations encapsulate a range of so-called mimetic discretizations providing discrete solutions mimicking physical properties of the continuous problem \cite{desbrun-hirani-leok-marsden-2005,arnold2007compatible,arnold2018finite,bochev2006principles}. These methods generally preserve conservation properties and spectral representations of operators, provide a coordinate-free means of prescribing physics on manifolds, and allow handling of the non-trivial null-spaces required in electromagnetics. In topological data analysis, combinatorial Hodge theory has emerged as a tool for analyzing flows on graphs along with their spectral and homological properties, e.g.,~\cite{MuhammadEgerstedt2006,JiangLimYaoYe2011, MaleticRajkovic2012,BarbarossaSardellitti2020, BattistonCencettiIacopiniLatoraLucasPataniaYoungPetri2020,Lim2020,SchaubBensonHornLippnerJadbabaie2020,TorresBianconi2020}. These techniques are supported by a graph calculus providing generalizations of gradient, curl, and divergence operators admitting interpretation as discrete exterior derivatives. As graphs are purely topological however, this graph calculus lacks any metric information, and the operators do not converge in a meaningful way to the familiar vector calculus necessary to model physical systems.

The key observation of the current work is that one may exploit the shared structure of graphs and meshes and use data to endow the graph calculus with "missing" metric information, in the process learning a data-driven exterior calculus (DDEC). Conservation laws are encoded onto the graph via trainable div/curl operators satisfying a Stokes theorem, while "black-box" DNNs may parameterize fluxes. This guarantees that the mathematical structures related to conservation and exact sequence properties are independent of the errors incurred during training. Consequently, this allows us to prove a number of properties of the resulting model, independent of the quantity of available data or particular local minimizer found during training.

The incorporation of physical principles into scientific machine learning tasks has surged in recent years, as it has become apparent that "off-the-shelf" ML tools often fail to provide robust predictions for science and engineering applications \cite{baker2019workshop}. Physics-informed approaches have achieved a range of successes by introducing regularizers that penalize deviations from physical properties \cite{raissi2017physics,raissi2019physics,lagaris1998artificial}. While simple to implement and effective for a range of problems, 
physics in these cases are only enforced to within optimization error and may hold to a relatively coarse tolerance (for open problems regarding their training, see e.g. \cite{wang2020understanding,wang2020and}). Some applications however require constraints to hold to machine precision; e.g. in electrodynamics it is critical that the solenoidal constraint hold to machine precision to handle the involution condition \cite{barth2006role,dafermos1986quasilinear}, and in forward modeling contexts compatible/mimetic discretizations provide approximations guaranteeing such properties hold by construction \cite{yee1966numerical,nedelec1980mixed,bossavit1998computational,bochev2001matching}. The current work provides an analogous means of designing machine learning architectures which enforce physics by construction, therefore removing the need for physics-informed regularizers.


\subsection{Paper organization}
We first recall necessary exterior calculus fundamentals before introducing our data-driven exterior calculus. While the theory is abstract, we focus on applying it to learn nonlinear perturbations of $div-grad$ and $div-curl$ model problems as canonical examples of physics requiring structure preservation. We next provide numerical analysis, establishing conditions under which the learned model has a unique solution. We establish that the data-driven exterior calculus inherits the desirable properties of the graph calculus and use them to analyze the well-posedness of a class of nonlinear elliptic problems. A necessary implementation question is how to obtain a graph to define the model upon. We provide a specific example considering coarse-graining of a high-fidelity mesh, associated with either a finite element simulation or histograms binning experimental data, and show how the relevant commuting diagrams are preserved under coarsening. Finally, we introduce a PDE-constrained optimizer to fit the model to data, allowing enforcement of physics exactly via equality constraint. The numerical analysis implies that the forward problem associated with the equality constraint is always well-posed given mild conditions on the architecture. Finally, we provide several numerical results demonstrating how one may learn efficient physics-preserving surrogates from high-fidelity data.

\section{The graph exterior calculus}\label{sec:graphcalc}

We recall first the graph calculus, which serves as the foundation for DDEC. Let $\mathbf{N} = \left\{n_i\right\}_{i=1}^{N_N}$ denote a set of nodes. We embed $\mathbf{N}$ in $\mathbb{R}^d$ by associating with each node a unique position $\mathbf{p}_i \subset \mathbf{R}^d$, $i \in 1,\dots,N_N$. We define a $k$-clique as an ordered tuple consisting of $k$ nodes, i.e. $t_k = [n_1,...,n_k]$. A $k$-clique has positive orientation if $\pi = \left\{i_1,...,i_k\right\}$ is an even permutation of $\left\{1,...,k\right\}$ and negative otherwise. Via the embedding of the graph, we may associate with each $k$-clique the $(k-1)$-simplex defined as the convex hull of the vertices $s_{k}=[\mathbf{p}_1,...,\mathbf{p}_k]$, for which the $k \leq d$ distinct points span a $k$-dimensional hyperplane. A $k$-chain $c_k$ may then be defined as a linear combination of (k+1)-cliques, and we denote the set of $k$-chains by $C_k$. One may introduce a \textit{boundary operator} $\partial_k : C_{k+1} \rightarrow C_{k}$ defined via

\begin{equation}\label{eq:bndry}
  \partial_k [n_1,...,n_k] = \sum_i^k (-1)^{i-1} [n_1,...,n_{i-1},\widehat{n_{i}},n_{i+1},n_k],
\end{equation}
where $\hat{\cdot}$ denotes an omitted entry, and which satisfies the property $\partial_{k-1} \circ \partial_k = 0$. When the dependence upon $k$ is clear, we will sometimes write the coboundary simply as $\partial$ - we will adopt this convention for similar operators throughout. With these definitions in hand we may finally introduce the chain complex as the following exact sequence pairing $k$-chains and boundary operators.

\begin{equation}
  \begin{tikzcd}\label{eq:chaincomplex}
    C_0 & \arrow[l, "\partial_0"] C_1 & \arrow[l, "\partial_1"] \dots & \arrow[l, "\partial_{d-1}"]  C_d 
  \end{tikzcd}
\end{equation}
with the standard convention that $\partial_{-1}$ maps $C_0$ to the empty set.

Note that in this graph context, the specific realization of the chain complex may be qualitatively different from the DEC setting. Traditionally in compatible discretizations, the complex is realized by partitioning the domain of interest into a collection of disjoint simplices to obtain $C_d$, and then deriving lower dimensional mesh entities $C_k$, $k<d$ via the boundary operator. In contrast, for the graph setting one may obtain overlapping simplices, in the sense that given unique $t_k,t_k' \in C_l$, $t_k \cap t_k' \neq \emptyset$. We will later discuss details regarding specific choice of chain complex, but for now keep the presentation abstract.

We next associate real numbers with the graph entities constituting the chain complex. For each set of chains $C_k$, we introduce the dual set of cochains $C^k$ consisting of linear functionals acting on $C_k$. Given $\phi \in C^k$, we denote the value associated with the $k$-chain $t_{i_1 i_2 ... i_k}$ via the shorthand $\phi_{i_1 i_2 ... i_k} := \phi(t_{i_1 i_2 ... i_k})$. Note that cochains inherit the orientation of the underlying chains, e.g. $\phi_{ij} = -\phi_{ji}$ via the definition of $\pi$. Introducing the \textit{coboundary operator} $\delta_k: C^k \rightarrow C^{k+1}$, we next arrive at the following cochain complex

\begin{equation}
  \begin{tikzcd}\label{eq:cochaincomplex}
    C^0  \arrow[r, "\delta_0"] & C^1 \arrow[r, "\delta_1"] & \dots \arrow[r, "\delta_{d-1}"]&  C^d 
  \end{tikzcd}
\end{equation}

We will formally denote the pairing between boundary and coboundary operators via the inner product
\begin{equation}
    \langle\phi,\partial_k t\rangle = \langle\delta_k \phi, t\rangle.
\end{equation}
In the traditional DEC setting, one would arrive at a definition of the coboundary via the generalized Stokes theorem, defined by the dual pairing of $\delta_k$ and $\partial_k$ via $\int_{\partial \omega} w = \int_\omega \delta w$. In the graph setting, we identify $\delta_k$ algebraically as the adjoint of the matrix representing $\partial_k$. For example, this gives rise to the following combinatorial gradient, assigning to the $2$-clique $t_{ij}$ the function
\begin{equation}\label{eq:combd0}
  \delta_0 \phi_{ij} = \phi_j - \phi_i.
\end{equation}
Similarly we may obtain the combinatorial curl by assigning to the 3-clique $t_{ijk}$ the function
\begin{equation}\label{eq:combd1}
  \delta_1 \phi_{ijk} = \phi_{ij} + \phi_{jk} + \phi_{ki}.
\end{equation}
One may easily see that $\delta_1 \circ \delta_0 = 0$. For the purposes of this work, we will not require $\delta_k$ for $k>1$, however the definition extends naturally to $1 < k\leq d$, and one may show that $\delta_k \circ \delta_{k-1} = 0$. In this manner, the coboundary operator inherits the exact sequence property of the boundary operator.

We next let $(\cdot,\cdot)_k$ denote an inner product mapping $C^k \times C^k \rightarrow \mathbb{R}$. 
This inner product induces a \textit{codifferential operator} $\delta^*_k : C^{k+1} \rightarrow C^{k}$ via the pairing $(v,\delta^*_k u)_k = (\delta_k v,u)_{k+1}$.  In this manner, the careful choice of $(\cdot,\cdot)_k$ will endow the codifferential with desirable approximation properties, however we note that independent of the choice of inner products the codifferential again inherits the exact sequence properties of the coboundary operator so that $\delta^*_{k-1} \circ \delta^*_{k} = 0$. This follows trivially from the definition, so that for all $v\in C^{k-2}$ and all $u\in C^{k}$ 
\begin{equation}
    (v,\delta^*_{k-1} \delta^*_k u)_{k-2} = (\delta_{k-1} v, \delta^*_k u)_{k-1} = (\delta_k \delta_{k-1} v, u)_{k} = 0.
\end{equation}

Finally, we will contrast how the choice of inner product typically used in the graph exterior calculus precludes the use of graph boundary/coboundary operators in discretizing PDE. In the graph context, one selects as $(\cdot,\cdot)_k$ the $\ell_2$ inner-product: 
\begin{equation}
(x,y)_k = \underset{{i \in \text{dim}(C^k)}}{\sum} x_i y_i.
\end{equation}
And its induced norm is denoted by $\| \cdot \|_k$. In this case, the codifferential $\delta^*_k$ may be identified as the transpose of the matrix associated with the coboundary $\delta_k$. For the remainder of this work, we will assume $\ell_2$ inner products in the definition of the codifferential unless otherwise specified, so that $\delta_0^*$ and $\delta_1^*$ correspond to the graph divergence and curl.


Besides the properties that $\delta_k \circ \delta_{k-1} = 0$ and $\delta_{k-1}^* \circ \delta_k^* = 0$, the Hodge-Laplacian is defined as follows 
\begin{equation}\label{def:hodge-laplacian}
L_k = \delta_{k-1} \delta_{k-1}^* + \delta_k^* \delta_k.
\end{equation}
From the combinatorial Hodge theorem, the combinatorial Laplacian decomposes the space $C^k$ as follows
\begin{equation}\label{eqn:hodge-decomp}
C^k = \operatorname{im}(\delta_{k-1}) \bigoplus \operatorname{ker}(L_k) \bigoplus \operatorname{im}(\delta_{k}^*),
\end{equation} 
which is the Hodge decomposition~\cite{Eckmann1945}. One direct consequence of the Hodge decompostion~\eqref{eqn:hodge-decomp} is the following Poincar\'{e} inequalities,
\begin{align*}
&\| z_k \|_k \leq c \| \delta_k z \|_{k+1}, \quad z_k \in \operatorname{im}(\delta_k^*),\\
&\| z_k \|_k \leq c^* \| \delta_{k-1}^* z_k \|_{k-1}, \quad z_k \in \operatorname{im}(\delta_{k-1}).
\end{align*}
Here $c$ and $c^*$ are Poincar\'{e} constants which depend upon the topological structure of the graph.  Another consequence of the Hodge decomposition~\eqref{eqn:hodge-decomp} is that the Hodge Laplacian $L_k$ is positive semidefinite and the dimension of the null space is equal to the dimension of the homology $\operatorname{ker}(\delta_k)/\operatorname{im}(\delta_{k-1})$.  This implies the invertibility of the Hodge Laplacian $L_k$ on the orthogonal complement of the null space. 

In the next section, we will introduce a data-driven exterior calculus and establish analogues of these properties.

\section{The data-driven exterior calculus}\label{DDEC}

We now parameterize these graph calculus operators in a manner which allows recovery of traditional DEC schemes for PDE discretizations as a specific instance. We consider the de Rham complex as a prototypical means of analyzing structure-preserving differential operators in $\mathbb{R}^{d}$, and seek to develop its discrete approximation on a graph. In general the construction presented here may be used to approximate any exact sequence: we restrict our presentation to the de Rham complex as a single example to support later discretization of PDEs in $\mathbb{R}^d$. In three dimensions, the de Rham complex for $\Omega \in \mathbb{R}^3$ is given by
\begin{equation}
\begin{tikzcd}\label{eq:derham3d}
C^{\infty}(\Omega)  \arrow[transform canvas={yshift=0.4ex},"\operatorname{grad}"]{r}  & \arrow[transform canvas={yshift=-0.4ex},"{\operatorname{div}}^*"]{l}  \left[C^{\infty}(\Omega)\right]^3 \arrow[transform canvas={yshift=0.4ex},"\operatorname{curl}"]{r} & \arrow[transform canvas={yshift=-0.4ex},"{\operatorname{curl}}^*"]{l} \left[C^{\infty}(\Omega)\right]^3 \arrow[transform canvas={yshift=0.4ex},"\operatorname{div}"]{r}  & \arrow[transform canvas={yshift=-0.4ex},"{\operatorname{grad}}^*"]{l} C^{\infty}(\Omega)
\end{tikzcd}
\end{equation}
Here, coboundary operators $\operatorname{grad}$, $\operatorname{curl}$, and $\operatorname{div}$ satisfying $\operatorname{curl}\circ \operatorname{grad} = \operatorname{div} \circ \operatorname{curl} = 0$; codifferential operators $\operatorname{div}^*$, $\operatorname{curl}^*$, and $\operatorname{grad}^*$ satisfying $\operatorname{curl}^* \circ \operatorname{grad}^* = \operatorname{div}^* \circ \operatorname{curl}^* = 0$. 
To support later two-dimensional examples, we will also consider the restriction to two dimensions $\Omega \in \mathbb{R}^2$.
\begin{equation}
\begin{tikzcd}\label{eq:derham2d}
C^{\infty}(\Omega)  \arrow[transform canvas={yshift=0.4ex},"\operatorname{curl}"]{r}  & \arrow[transform canvas={yshift=-0.4ex},"{\operatorname{curl}}^*"]{l}  \left[C^{\infty}(\Omega)\right]^2 \arrow[transform canvas={yshift=0.4ex},"\operatorname{div}"]{r} & \arrow[transform canvas={yshift=-0.4ex},"{\operatorname{grad}}^*"]{l} C^{\infty}(\Omega)
\end{tikzcd}
\end{equation}
For completeness, we recall the alternate complex in $\mathbb{R}^2$
\begin{equation}
\begin{tikzcd}\label{eq:derham2dalt}
C^{\infty}(\Omega)  \arrow[transform canvas={yshift=0.4ex},"\operatorname{grad}"]{r}  & \arrow[transform canvas={yshift=-0.4ex},"{\operatorname{div}}^*"]{l}  \left[C^{\infty}(\Omega)\right]^2 \arrow[transform canvas={yshift=0.4ex},"\operatorname{curl}"]{r} & \arrow[transform canvas={yshift=-0.4ex},"{\operatorname{curl}}^*"]{l} C^{\infty}(\Omega)
\end{tikzcd}.
\end{equation}
In this work however, we consider only ~\eqref{eq:derham2d} as the natural complex to obtain conservation properties. 

\subsection{Data-driven coboundaries and codifferentials} \label{sec:coboudary-codifferential}
Consider the general case $(\Omega\in\mathbb{R}^d)$, we define new coboundary and codifferential operators by associating learnable metric information with the graph operators as follows,
\begin{equation} \label{eqn:discreteOps0}
\mathsf{d}_k := \mathbf{B}_{k+1} \delta_k \mathbf{B}_k^{-1}, \quad \text{and} \quad \mathsf{d}_k^* = \mathbf{D}_k^{-1} \delta_k^* \mathbf{D}_{k+1}.
\end{equation}
For example, in~$\mathbb{R}^3$, we have
\begin{equation}\label{eqn:discreteOps1}
GRAD = \mathbf{B}_1 \delta_0 \mathbf{B}_0^{-1}
\hspace{1cm}
CURL = \mathbf{B}_2 \delta_1 \mathbf{B}_1^{-1}
\hspace{1cm}
DIV = \mathbf{B}_3 \delta_2  \mathbf{B}_2^{-1}
\end{equation}
\begin{equation}\label{eqn:discreteOps2}
DIV^* = \mathbf{D}_0^{-1} \delta^*_0 \mathbf{D}_1
\hspace{1cm}
CURL^* =  \mathbf{D}_1^{-1} \delta^*_1 \mathbf{D}_2
\hspace{1cm}
GRAD^* =  \mathbf{D}_2^{-1} \delta^*_2 \mathbf{D}_3
\end{equation}
In~$\mathbb{R}^2$, we have
\begin{equation}\label{eqn:discreteOps2D1}
CURL = \mathbf{B}_1 \delta_0 \mathbf{B}_0^{-1}, \hspace{1cm} DIV = \mathbf{B}_2 \delta_1 \mathbf{B}_1^{-1}, 
\end{equation}
\begin{equation}\label{eqn:discreteOps2D2}
CURL^* = \mathbf{D}_0^{-1} \delta_0^* \mathbf{D}_1, \hspace{1cm} GRAD^* = \mathbf{D}_1^{-1} \delta_1^* \mathbf{D}_2, 
\end{equation}

Here, we denote discrete graph operations in capital letters, and use lower case for continuum counterparts. The $\mathbf{B}_k$ and $\mathbf{D}_k$ denote a diagonal tensor with positive entries weighting corresponding elements of $C^k$. They naturally provide chain maps as follows,

\begin{equation}\label{eqn:discreteDiagram}
\begin{tikzcd} 
C^0  \arrow{r}{\mathsf{d}_0} & C^1 \arrow{r}{\mathsf{d}_1} & C^2 \arrow{r}{\mathsf{d}_2}&  C^3 \arrow{r}{\mathsf{d}_3} & \cdots \arrow{r}{\mathsf{d}_{d-1}} & C^d\\
C^0  \arrow[shift left]{r}{\delta_0} \arrow[swap]{u}{\mathbf{B}_0}  & \arrow[shift left]{l}{\delta^*_1} C^1 \arrow[shift left]{r}{\delta_1} \arrow[swap]{u}{\mathbf{B}_1}  & \arrow[shift left]{l}{\delta^*_2} C^2 \arrow[shift left]{r}{\delta_2} \arrow[swap]{u}{\mathbf{B}_2}& \arrow[shift left]{l}{\delta^*_3} C^3 \arrow[shift left]{r}{\delta_3} \arrow[swap]{u}{\mathbf{B}_3}& \arrow[shift left]{l}{\delta^*_{d-1}} \cdots \arrow[shift left]{r}{\delta_{d-1}} & \arrow[shift left]{l}{\delta^*_d} C^d \arrow[swap]{u}{\mathbf{B}_d}\\
C^0 \arrow[swap]{u}{\mathbf{D}_0} & C^1 \arrow[swap]{u}{\mathbf{D}_1} \arrow{l}{\mathsf{d}_0^*} & C^2 \arrow[swap]{u}{\mathbf{D}_2} \arrow{l}{\mathsf{d}_1^*}&  C^3 \arrow[swap]{u}{\mathbf{D}_3} \arrow{l}{\mathsf{d}_2^*} &  \arrow{l}{\mathsf{d}_3^*} \cdots &   \arrow{l}{\mathsf{d}_{d-1}^*} C^d \arrow[swap]{u}{\mathbf{D}_d}
\end{tikzcd}
\end{equation}
Based on the definitions~\eqref{eqn:discreteOps1} and~\eqref{eqn:discreteOps2}, it is easy to verify that the diagram~\eqref{eqn:discreteDiagram} is commutative, i.e.,
\begin{equation*}
\mathbf{B}_{k+1} \delta_k = \mathsf{d}_k \mathbf{B}_k, \quad \text{and} \quad \mathbf{D}_k^{-1} \delta_k^* = \mathsf{d}_k^* \mathbf{D}_{k+1}^{-1}.
\end{equation*}
In $\mathbb{R}^3$, we have,
\begin{equation*}
\mathbf{B}_1 \delta_0 = GRAD \mathbf{B}_0, \quad \mathbf{B}_2 \delta_1 = CURL \mathbf{B}_1, \quad \mathbf{B}_3 \delta_2 = DIV \mathbf{B}_2,
\end{equation*}
and
\begin{equation*}
\delta_0^* \mathbf{D}_0^{-1} =  DIV^* \mathbf{D}_1^{-1}, \quad \mathbf{D}_1^{-1}\delta_1^*  =   CURL^* \mathbf{D}_2^{-1}, \quad \mathbf{D}_2^{-1} \delta_2^*  =  GRAD^*  \mathbf{D}_3^{-1}.
\end{equation*}
And in $\mathbb{R}^2$, similarly, we have
\begin{equation*}
\mathbf{B}_1 \delta_0 = CURL \mathbf{B}_0, \quad \mathbf{B}_2 \delta_1 = DIV \mathbf{B}_1,
\end{equation*}
and
\begin{equation*}
\delta_0^* \mathbf{D}_0^{-1} =  CURL^* \mathbf{D}_1^{-1}, \quad \mathbf{D}_1^{-1}\delta_1^*  =   GRAD^* \mathbf{D}_2^{-1}.
\end{equation*}

\begin{thm}\label{thm:exseq1}
The discrete derivatives $\mathsf{d}_k$ in~\eqref{eqn:discreteOps0} form an exact sequence if the simplicial complex is exact, and in particular $\mathsf{d}_{k+1} \circ \mathsf{d}_k = 0$.
\end{thm}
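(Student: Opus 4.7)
The plan is a direct computation exploiting the invertibility of the diagonal weighting matrices $\mathbf{B}_k$ together with the exact sequence property of the coboundaries $\delta_k$ already established in Section~\ref{sec:graphcalc}. Since each $\mathbf{B}_k$ is diagonal with strictly positive entries, it has a well-defined inverse, so conjugation by the $\mathbf{B}_k$'s is a bijection between $C^k$ and itself that preserves kernel/image relationships.

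First I would verify the complex property by direct substitution. Composing the definitions~\eqref{eqn:discreteOps0} gives
\begin{equation*}
\mathsf{d}_{k+1}\circ\mathsf{d}_k = \bigl(\mathbf{B}_{k+2}\,\delta_{k+1}\,\mathbf{B}_{k+1}^{-1}\bigr)\bigl(\mathbf{B}_{k+1}\,\delta_k\,\mathbf{B}_k^{-1}\bigr) = \mathbf{B}_{k+2}\,(\delta_{k+1}\circ\delta_k)\,\mathbf{B}_k^{-1},
\end{equation*}
and the inner factor vanishes because the graph coboundary complex is a cochain complex. This is exactly the commuting-diagram argument already set up in~\eqref{eqn:discreteDiagram}: the $\mathbf{B}_k$'s form an isomorphism of complexes, so $\mathsf{d}_{k+1}\circ\mathsf{d}_k$ is conjugate to $\delta_{k+1}\circ\delta_k = 0$.

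Next I would promote this to full exactness under the hypothesis that the underlying simplicial (cochain) complex is exact, i.e., $\ker(\delta_{k+1}) = \operatorname{im}(\delta_k)$. The inclusion $\operatorname{im}(\mathsf{d}_k)\subseteq\ker(\mathsf{d}_{k+1})$ is immediate from the complex property just shown. For the reverse, let $u\in\ker(\mathsf{d}_{k+1})$. Invertibility of $\mathbf{B}_{k+2}$ yields $\delta_{k+1}(\mathbf{B}_{k+1}^{-1}u)=0$, so by exactness of the graph complex there exists $w\in C^k$ with $\mathbf{B}_{k+1}^{-1}u = \delta_k w$. Setting $v := \mathbf{B}_k w$ gives
\begin{equation*}
\mathsf{d}_k v = \mathbf{B}_{k+1}\,\delta_k\,\mathbf{B}_k^{-1}(\mathbf{B}_k w) = \mathbf{B}_{k+1}\,\delta_k w = u,
\end{equation*}
so $u\in\operatorname{im}(\mathsf{d}_k)$. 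This closes the two-sided inclusion.

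There is no real obstacle here; the only subtlety worth flagging is that exactness of $\mathsf{d}_\bullet$ is conditional on exactness of $\delta_\bullet$, which is a topological property of the chosen simplicial/chain complex rather than something imposed by the learned metric information $\mathbf{B}_k$. The argument emphasizes that the data-driven calculus inherits its homological content entirely from the graph topology, while the weights only rescale representatives, which is the conceptual point the theorem is making.
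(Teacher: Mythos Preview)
Your proposal is correct and follows essentially the same approach as the paper: both arguments use that the invertible diagonal maps $\mathbf{B}_k$ realize an isomorphism of cochain complexes (the commuting diagram~\eqref{eqn:discreteDiagram}), so the complex property and exactness transfer from $\delta_\bullet$ to $\mathsf{d}_\bullet$. Your write-up simply makes the ``cycles to cycles, boundaries to boundaries'' step explicit via the direct kernel/image chase, whereas the paper states it in one line.
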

\begin{proof}
Because~\eqref{eqn:discreteDiagram} is a commutative diagram, the chain maps take cycles to cycles and boundaries to boundaries.  Therefore, if the simplicial complex is exact, then~\eqref{eqn:discreteOps1} forms an exact sequence. Moreover, 
$\mathsf{d}_{k+1} \circ \mathsf{d}_k = 0$ can be verified by the definitions~\eqref{eqn:discreteOps0}. 
\end{proof}

\begin{remark}
 In $\mathbb{R}^3$, we have~$CURL \circ GRAD  = DIV \circ CURL = 0$. And in $\mathbb{R}^2$, we have~$DIV \circ CURL = 0$.
\end{remark}

\begin{thm}\label{thm:exseq2}
The discrete derivatives $\mathsf{d}_k^*$ in~\eqref{eqn:discreteOps0} form an exact sequence if the simplicial complex is exact, and in particular $\mathsf{d}_k^* \circ \mathsf{d}_{k+1}^* = 0$.  
\end{thm}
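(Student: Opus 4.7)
The plan is to mirror the proof of Theorem \ref{thm:exseq1} essentially verbatim, swapping the roles of $\mathbf{B}_k$ and $\mathbf{D}_k$ and running the commuting diagram in the opposite direction. The key algebraic facts are that each $\mathbf{D}_k$ is a diagonal matrix with strictly positive entries (hence invertible), and that the graph codifferentials $\delta_k^*$ satisfy $\delta_k^* \circ \delta_{k+1}^* = 0$, as established in Section \ref{sec:graphcalc}.

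First, I would verify the local nilpotency $\mathsf{d}_k^* \circ \mathsf{d}_{k+1}^* = 0$ by direct substitution:
\begin{equation*}
\mathsf{d}_k^* \circ \mathsf{d}_{k+1}^* = \bigl(\mathbf{D}_k^{-1} \delta_k^* \mathbf{D}_{k+1}\bigr) \bigl(\mathbf{D}_{k+1}^{-1} \delta_{k+1}^* \mathbf{D}_{k+2}\bigr) = \mathbf{D}_k^{-1} \bigl(\delta_k^* \circ \delta_{k+1}^*\bigr) \mathbf{D}_{k+2} = 0,
\end{equation*}
where the inner $\mathbf{D}_{k+1}$ and $\mathbf{D}_{k+1}^{-1}$ cancel and the graph-level identity $\delta_k^* \circ \delta_{k+1}^* = 0$ kills the middle factor.

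Second, for the exact sequence claim I would exploit the bottom portion of diagram~\eqref{eqn:discreteDiagram}, where the squares involving $\mathbf{D}_k$ and the codifferentials $\delta_k^*$, $\mathsf{d}_k^*$ commute (as explicitly recorded just below the diagram). Because each $\mathbf{D}_k$ is an invertible diagonal map, it is an isomorphism of cochain spaces, and so the vertical maps assemble into a chain isomorphism between the $\delta^*$-complex and the $\mathsf{d}^*$-complex. Under a chain isomorphism, kernels map isomorphically to kernels and images to images, hence $\ker \mathsf{d}_k^* = \mathbf{D}_k^{-1}\bigl(\ker \delta_k^*\bigr)$ and $\operatorname{im} \mathsf{d}_{k+1}^* = \mathbf{D}_k^{-1}\bigl(\operatorname{im} \delta_{k+1}^*\bigr)$. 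Thus if the underlying simplicial complex is exact, i.e.\ $\ker \delta_k^* = \operatorname{im} \delta_{k+1}^*$, then the corresponding equality transfers to the $\mathsf{d}^*$-complex.

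The only subtlety, and the one point I would be careful to articulate, is the direction of the arrows: the codifferential complex runs "downward" in~\eqref{eqn:discreteDiagram}, so one must check that the squares relevant to $\mathsf{d}_k^*$ commute in the form $\mathbf{D}_k^{-1} \delta_k^* = \mathsf{d}_k^* \mathbf{D}_{k+1}^{-1}$ (equivalent to the identity stated immediately after the diagram). Given that, there is no real obstacle — the result is essentially a formal consequence of conjugating an exact sequence by invertible diagonal operators.
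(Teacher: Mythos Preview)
Your proposal is correct and follows essentially the same approach as the paper: the paper's proof simply invokes the commutativity of diagram~\eqref{eqn:discreteDiagram} and the definitions~\eqref{eqn:discreteOps0}, noting that the argument is identical to that of Theorem~\ref{thm:exseq1}. Your write-up is more explicit about the chain-isomorphism mechanism (invertible $\mathbf{D}_k$ transporting kernels and images), but this is exactly what the paper's ``cycles to cycles and boundaries to boundaries'' line is encoding.
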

\begin{proof}
The conclusion follows from the fact that~\eqref{eqn:discreteDiagram} is commutative and the definitions~\eqref{eqn:discreteOps0}. The proof is essentially the same as the proof of Theorem~\ref{thm:exseq1}
\end{proof}

\begin{remark}
In $\mathbb{R}^3$, we have~$DIV^* \circ CURL^* = CURL^* \circ GRAD^* = 0$. And in $\mathbb{R}^2$, we have~$CURL^* \circ GRAD^* = 0$
\end{remark}

We will refer to this collection of operators as a data-driven exterior calculus, with the understanding that the metric information encoded in $\mathbf{B}_k$ and $\mathbf{D}_k$ will be learned from data. Note that in the traditional low-order compatible/mimetic schemes, these metric tensors contain geometric information related to the oriented measures of mesh entities, such as cell volumes, face moments, etc. ~\cite{AdlerCavanaughHuZikatanov2020}.  Following from the exact sequence property, this exterior calculus structure inherits the following other properties of the graph calculus.

Naturally, based on the Hodge Laplacians~\eqref{def:hodge-laplacian} and~\eqref{eqn:discreteDiagram}, we can define generalized data-driven Hodge-Laplacians as follows,
\begin{equation*}
\Delta_k = \mathsf{d}_{k-1} \mathsf{d}_{k-1}^* + \mathsf{d}_k^* \mathsf{d}_k
\end{equation*}
For example, for practical applications, we consider the following Hodge-Laplacians in $\mathbb{R}^2$:
\begin{align*}
\Delta_1 &:= CURL \circ CURL^* + GRAD^* \circ DIV   \\
\Delta_2 &:= DIV \circ GRAD^*.
\end{align*}

The Hodge decomposition~\eqref{eqn:hodge-decomp} also can be generalized naturally.  Here, we choose $(\cdot, \cdot)_k :=(\cdot, \cdot)_{\mathbf{D}_k \mathbf{B}_k^{-1}}$ and denote its induced norm as $\| \cdot \|_k$.

\begin{thm}[Hodge Decomposition]
For $C^k$, the following decomposition holds
\begin{equation}\label{eqn:hodge-decomp-new}
C^k = \operatorname{im}( \mathsf{d}_{k-1} ) {\bigoplus}_k \operatorname{ker}(\Delta_k) {\bigoplus}_k \operatorname{im}(  \mathsf{d}_{k}^*),
\end{equation}
where ${\bigoplus}_k$ means the orthogonality with respect to the $(\cdot, \cdot)_{k}$-inner product.
\end{thm}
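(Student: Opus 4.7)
The plan is to reduce this to the standard Hodge decomposition for a finite-dimensional cochain complex equipped with inner products. Everything hinges on the observation that the paper's algebraic definition $\mathsf{d}_k^* = \mathbf{D}_k^{-1}\delta_k^*\mathbf{D}_{k+1}$ is precisely engineered so that $\mathsf{d}_k^*$ is the honest adjoint of $\mathsf{d}_k$ with respect to the weighted inner products $(\cdot,\cdot)_k = (\cdot,\cdot)_{\mathbf{D}_k\mathbf{B}_k^{-1}}$. Once that adjointness is established, the three-term orthogonal splitting follows from abstract linear algebra applied to the cochain complex of the $\mathsf{d}_k$'s, using the coboundary identity $\mathsf{d}_k\circ\mathsf{d}_{k-1}=0$ already supplied by Theorem~\ref{thm:exseq1}.

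First I would verify adjointness directly. Writing $(x,y)_k = x^{\top}\mathbf{D}_k\mathbf{B}_k^{-1}y$ and using that all $\mathbf{B}_k$, $\mathbf{D}_k$ are diagonal (hence symmetric and mutually commuting), a straight computation gives
\begin{equation*}
(\mathsf{d}_{k-1}u,v)_k = u^{\top}\mathbf{B}_{k-1}^{-1}\delta_{k-1}^*\mathbf{D}_k v = (u,\mathsf{d}_{k-1}^* v)_{k-1},
\end{equation*}
where the second equality follows by inserting $\mathbf{D}_{k-1}\mathbf{D}_{k-1}^{-1}$ and commuting the diagonal factors.

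Next I would characterize the harmonic space. Pairing $\Delta_k u$ with $u$ and using adjointness yields $(\Delta_k u,u)_k = \|\mathsf{d}_{k-1}^* u\|_{k-1}^2 + \|\mathsf{d}_k u\|_{k+1}^2$, so positive semidefiniteness gives $\operatorname{ker}(\Delta_k) = \operatorname{ker}(\mathsf{d}_k)\cap\operatorname{ker}(\mathsf{d}_{k-1}^*)$. Pairwise $(\cdot,\cdot)_k$-orthogonality of $\operatorname{im}(\mathsf{d}_{k-1})$, $\operatorname{ker}(\Delta_k)$, and $\operatorname{im}(\mathsf{d}_k^*)$ then follows by shifting $\mathsf{d}_{k-1}$ or $\mathsf{d}_k^*$ across the pairing and invoking either $\mathsf{d}_k\circ\mathsf{d}_{k-1}=0$ (from Theorem~\ref{thm:exseq1}) or the kernel characterization just derived.

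Finally, to show that the three subspaces span $C^k$, I would apply the standard adjoint splittings in a finite-dimensional inner-product space: $C^k = \operatorname{im}(\mathsf{d}_{k-1}){\bigoplus}_k\operatorname{ker}(\mathsf{d}_{k-1}^*)$ and $C^k = \operatorname{im}(\mathsf{d}_k^*){\bigoplus}_k\operatorname{ker}(\mathsf{d}_k)$. Since $\operatorname{im}(\mathsf{d}_{k-1})\subseteq\operatorname{ker}(\mathsf{d}_k)$ by exactness, intersecting the first splitting with $\operatorname{ker}(\mathsf{d}_k)$ yields $\operatorname{ker}(\mathsf{d}_k) = \operatorname{im}(\mathsf{d}_{k-1}){\bigoplus}_k\operatorname{ker}(\Delta_k)$, and combining with the second decomposition produces the claimed three-way splitting. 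The main obstacle in the whole argument is really the adjointness check of Step 1; once $\mathsf{d}_k^*$ is recognized as the true adjoint of $\mathsf{d}_k$ under the weighted pairing, the remainder is essentially a transcription of the classical proof of~\eqref{eqn:hodge-decomp}.
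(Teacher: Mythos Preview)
Your argument is correct, and it takes a somewhat different route from the paper. The paper does \emph{not} first establish that $\mathsf{d}_k^*$ is the genuine $(\cdot,\cdot)_k$-adjoint of $\mathsf{d}_k$. Instead, it obtains the direct-sum splitting by transferring the already-known combinatorial Hodge decomposition~\eqref{eqn:hodge-decomp} through the commutative diagram~\eqref{eqn:discreteDiagram} (the invertible diagonal maps $\mathbf{B}_k,\mathbf{D}_k$ carry $\operatorname{im}(\delta_{k-1})$, $\operatorname{ker}(L_k)$, $\operatorname{im}(\delta_k^*)$ to the corresponding $\mathsf{d}$-spaces), and then checks each of the three pairwise orthogonality relations by hand, expanding $\mathsf{d}_{k-1}$ and $\mathsf{d}_k^*$ in terms of $\delta_{k-1},\delta_k^*,\mathbf{B}_\bullet,\mathbf{D}_\bullet$ and reducing to $\delta_k\delta_{k-1}=0$ or to $\delta_{k-1}^*\mathbf{D}_k\mathbf{h}_k=0$, $\delta_k\mathbf{B}_k^{-1}\mathbf{h}_k=0$ for harmonic $\mathbf{h}_k$. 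Your approach front-loads a single adjointness computation and then invokes the abstract finite-dimensional Hodge theory; this is cleaner and makes the later positive-semidefiniteness of $\Delta_k$ (Theorem~\ref{thm:invert-Hodge-Laplacian}) immediate. The paper's approach, by contrast, leans on the commuting chain maps so that spanning is inherited for free from~\eqref{eqn:hodge-decomp}, sparing your explicit $\operatorname{im}\oplus\operatorname{ker}$ argument. Both are short; yours exposes more structure, theirs reuses more of what was already set up.
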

\begin{proof}
Since~\eqref{eqn:discreteDiagram} is a commutative diagram, following from the Hodge decomposition~\eqref{eqn:hodge-decomp}, $C^k$ can be naturally decomposed into three parts, $\operatorname{im}( \mathsf{d}_{k-1} )$, $\operatorname{im}( \mathsf{d}_{k}^* )$, and $\operatorname{ker}(\Delta_k) $.  Next we show this decomposition is orthogonal with respect to the $(\cdot, \cdot)_k$-inner product, i.e., $(\cdot, \cdot)_{\mathbf{D}_k \mathbf{B}_k^{-1}}$-inner product.  For $ \mathbf{u}_{k-1} \in C^{k-1} $ and $\mathbf{u}_{k+1} \in C^{k+1}$, we have
\begin{align*}
& \quad ( \mathsf{d}_{k-1} \mathbf{u}_{k-1}, \mathsf{d}_{k}^* \mathbf{u}_{k+1})_{k} \\
& = ( \mathbf{B}_{k} \delta_{k-1} \mathbf{B}_{k-1}^{-1} \mathbf{u}_{k-1}, \mathbf{D}_{k}^{-1} \delta_{k}^* \mathbf{D}_{k+1} \mathbf{u}_{k+1})_{\mathbf{D}_k \mathbf{B}_k^{-1}} \\
&=  ( \delta_{k-1} \mathbf{B}_{k-1}^{-1} \mathbf{u}_{k-1}, \delta_{k}^* \mathbf{D}_{k+1} \mathbf{u}_{k+1}) \\
& = ( \delta_k \delta_{k-1} \mathbf{B}_{k-1}^{-1} \mathbf{u}_{k-1}, \mathbf{D}_{k+1} \mathbf{u}_{k+1}) \\
& = 0
\end{align*}
For $\mathbf{h}_k \in \operatorname{ker}(\Delta_k)$, we have $\mathsf{d}_{k-1}^*  \mathbf{h}_k = 0$ and $\mathsf{d}_{k} \mathbf{h}_k = 0 $, which implies $\delta_{k-1}^* \mathbf{D}_k \mathbf{h}_k = 0$ and $\delta_k \mathbf{B}_k^{-1} \mathbf{h}_k = 0$. And then for $\mathbf{u}_{k-1} \in C^{k-1} $, 
\begin{align*}
& \quad ( \mathsf{d}_{k-1} \mathbf{u}_{k-1}, \mathbf{h}_k)_{k} \\
& = ( \mathbf{B}_{k} \delta_{k-1} \mathbf{B}_{k-1}^{-1} \mathbf{u}_{k-1}, \mathbf{h}_k)_{\mathbf{D}_k \mathbf{B}_k^{-1}}\\
& = ( \mathbf{D}_k  \delta_{k-1} \mathbf{B}_{k-1}^{-1} \mathbf{u}_{k-1},  \mathbf{h}_k) \\
& = (\mathbf{B}_{k-1}^{-1} \mathbf{u}_{k-1}, \delta_{k-1}^* \mathbf{D}_k \mathbf{h}_k ) \\
& = 0.
\end{align*}
On the other hand, for $\mathbf{u}_{k+1} \in C^{k+1}$, we have
\begin{align*}
& \quad (\mathsf{d}_{k}^* \mathbf{u}_{k+1}, \mathbf{h}_k)_{k} \\
& = (\mathbf{D}_{k}^{-1} \delta_{k}^* \mathbf{D}_{k+1} \mathbf{u}_{k+1}, \mathbf{h}_k)_{\mathbf{D}_k \mathbf{B}_k^{-1}} \\
& = (\mathbf{B}_{k}^{-1} \delta_{k}^* \mathbf{D}_{k+1} \mathbf{u}_{k+1}, \mathbf{h}_k) \\
& = (  \mathbf{D}_{k+1} \mathbf{u}_{k+1}, \delta_{k} \mathbf{B}_{k}^{-1} \mathbf{h}_k) \\
& = 0.
\end{align*}
Thus, the decomposition is orthogonal with respect to the $(\cdot, \cdot)_{k}$-inner product, which completes the proof.
\end{proof}

For example, in $\mathbb{R}^3$, we have the following Hodge decomposition when $k=1$,
\begin{equation*}
C^1 = \operatorname{im}(GRAD) {\bigoplus}_k \operatorname{ker}(\Delta_1) {\bigoplus}_k \operatorname{im}( CURL^*),
\end{equation*}
and when $k=2$
\begin{equation*}
C^2 = \operatorname{im}(CURL) {\bigoplus}_k \operatorname{ker}(\Delta_2) {\bigoplus}_k \operatorname{im}( GRAD^*).
\end{equation*}
In $\mathbb{R}^2$, we have the following Hodge decomposition when $k=1$
\begin{equation*}
C^1 = \operatorname{im}(CURL) {\bigoplus}_k \operatorname{ker}(\Delta_1) {\bigoplus}_k \operatorname{im}( GRAD^*).
\end{equation*}

Based on the Hodge decomposition, we have the following Poincar\'{e} inequality. 
\begin{thm}[Poincar\'{e} inequality]
For each $k$, there exists a constant $c_{P,k}$ such that
\begin{equation*}
\| \mathbf{z}_k \|_{k} \leq c_{P,k} \| \mathsf{d}_k \mathbf{z}_k \|_{k+1}, \quad \mathbf{z}_k \in \operatorname{im}(\mathsf{d}_{k}^*),
\end{equation*}
and another constant $c_{P,k}^*$ such that
\begin{equation*}
\| \mathbf{z}_{k} \|_{k} \leq c^*_{P,k} \| \mathsf{d}^*_{k-1} \mathbf{z}_{k} \|_{k-1}, \quad \mathbf{z}_{k} \in \operatorname{im}(\mathsf{d}_{k-1}).
\end{equation*}
Thus, for $\mathbf{u}_k \in C^k$, we have
\begin{equation*}
  \inf_{\mathbf{h}_k \in \operatorname{ker}(\Delta_k)}\| \mathbf{u}_k - \mathbf{h}_k \| _{k} \leq C \left( \| \mathsf{d}_k \mathbf{u}_k \|_{k+1} + \| \mathsf{d}^*_{k-1} \mathbf{u}_{k} \|_{k-1} \right),
\end{equation*}
where constant $C > 0$ only depends on $c_{P,k}$ and $c_{P,k}^*$.
\end{thm}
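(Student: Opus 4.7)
The plan is to prove each of the three inequalities by exploiting the Hodge decomposition established in the preceding theorem, rather than trying to push each statement through the commutative diagram to the graph Poincaré inequalities directly (which is complicated by the fact that $\mathbf{B}_k$ controls the coboundary while $\mathbf{D}_k$ controls the codifferential). In finite dimensions, Poincaré inequalities are essentially statements that certain restrictions of linear operators are injective, and the Hodge orthogonality we have just proved will supply exactly that injectivity.

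First I would handle the first inequality. Let $\mathbf{z}_k \in \operatorname{im}(\mathsf{d}_k^*)$ and consider the restriction $\mathsf{d}_k\big|_{\operatorname{im}(\mathsf{d}_k^*)}$. By $\mathsf{d}_k \circ \mathsf{d}_{k-1} = 0$ and the definition of $\Delta_k$, the kernel of $\mathsf{d}_k$ contains $\operatorname{im}(\mathsf{d}_{k-1}) \oplus_k \operatorname{ker}(\Delta_k)$, which by the Hodge decomposition is the $(\cdot,\cdot)_k$-orthogonal complement of $\operatorname{im}(\mathsf{d}_k^*)$. Hence $\operatorname{im}(\mathsf{d}_k^*) \cap \operatorname{ker}(\mathsf{d}_k) = \{0\}$ and the restriction is injective. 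On this finite-dimensional subspace, injectivity of $\mathsf{d}_k$ forces the smallest generalized singular value (with respect to the $(\cdot,\cdot)_k$ and $(\cdot,\cdot)_{k+1}$ inner products) to be strictly positive, and its reciprocal defines $c_{P,k}$. An identical argument applied to $\mathsf{d}_{k-1}^*\big|_{\operatorname{im}(\mathsf{d}_{k-1})}$, using that $\operatorname{ker}(\mathsf{d}_{k-1}^*) \supseteq \operatorname{im}(\mathsf{d}_k^*) \oplus_k \operatorname{ker}(\Delta_k)$, yields the second inequality with constant $c^*_{P,k}$.

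For the combined estimate, given $\mathbf{u}_k \in C^k$ I would apply the Hodge decomposition to write $\mathbf{u}_k = \mathsf{d}_{k-1}\mathbf{w} + \mathbf{h}_k + \mathsf{d}_k^* \mathbf{v}$ with the three summands pairwise $(\cdot,\cdot)_k$-orthogonal. Since the harmonic component $\mathbf{h}_k$ lies in $\operatorname{ker}(\Delta_k)$, this choice achieves the infimum in the left-hand side, so that
\begin{equation*}
\inf_{\mathbf{h}\in\operatorname{ker}(\Delta_k)}\|\mathbf{u}_k - \mathbf{h}\|_k^2 = \|\mathsf{d}_{k-1}\mathbf{w}\|_k^2 + \|\mathsf{d}_k^*\mathbf{v}\|_k^2
\end{equation*}
by orthogonality. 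Applying $\mathsf{d}_k$ and $\mathsf{d}_{k-1}^*$ to the decomposition and using $\mathsf{d}_k \circ \mathsf{d}_{k-1} = 0$, $\mathsf{d}_{k-1}^* \circ \mathsf{d}_k^* = 0$, together with $\mathsf{d}_k\mathbf{h}_k = \mathsf{d}_{k-1}^*\mathbf{h}_k = 0$, gives $\mathsf{d}_k\mathbf{u}_k = \mathsf{d}_k\mathsf{d}_k^*\mathbf{v}$ and $\mathsf{d}_{k-1}^*\mathbf{u}_k = \mathsf{d}_{k-1}^*\mathsf{d}_{k-1}\mathbf{w}$. Invoking the two Poincaré inequalities already proven on $\mathsf{d}_k^*\mathbf{v} \in \operatorname{im}(\mathsf{d}_k^*)$ and $\mathsf{d}_{k-1}\mathbf{w} \in \operatorname{im}(\mathsf{d}_{k-1})$ respectively then yields the desired bound with $C = \max(c_{P,k}, c^*_{P,k})$ (after an application of the elementary inequality $\sqrt{a^2+b^2}\le a+b$ for $a,b\ge 0$).

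The main conceptual step is the injectivity argument, which relies critically on the orthogonality of the Hodge decomposition in the $(\cdot,\cdot)_k$-inner product rather than in the ambient $\ell_2$ inner product; this is the reason the weighted inner product $(\cdot,\cdot)_{\mathbf{D}_k\mathbf{B}_k^{-1}}$ was introduced in the first place. I do not expect any serious obstacle, since everything takes place in finite dimensions, but a small technical point to verify is that the constants $c_{P,k}, c^*_{P,k}$ obtained depend only on the spectra of the restrictions of $\mathsf{d}_k$ and $\mathsf{d}_{k-1}^*$ to the relevant subspaces, and in particular remain finite for any admissible (strictly positive diagonal) choice of $\mathbf{B}_k$ and $\mathbf{D}_k$.
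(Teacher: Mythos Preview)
Your proposal is correct and follows essentially the same approach as the paper: both derive the Poincar\'{e} inequalities and the combined estimate as direct consequences of the Hodge decomposition with respect to the $(\cdot,\cdot)_{\mathbf{D}_k\mathbf{B}_k^{-1}}$-inner product. The paper's proof is in fact a single sentence to this effect, so your argument is a faithful (and considerably more detailed) expansion of exactly what the authors had in mind.
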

\begin{proof}
These inequalities are a direct consequence of the Hodge decomposition~\eqref{eqn:hodge-decomp-new}. 
\end{proof}

Since we are considering matrix representation, by direct computation, we can see that 
\begin{equation*}
c_{P,k} = \lambda_{\min} (\mathsf{d}_k^* \mathsf{d}_k)^{-1/2}.
\end{equation*}
where $\lambda_{\min}$ denotes the smallest non-trivial eigenvalue. For example, in $\mathbb{R}^3$, when $k=0$, 
\begin{equation*}
c_{P,0} = \lambda_{\min}(\mathsf{d}_0^* \mathsf{d}_0)^{-1/2} = \lambda_{\min}(\delta_0^* \mathbf{D}_{1} \mathbf{B}_{1} \delta_0)^{-1/2}=  \lambda_{\min}(DIV^* \circ GRAD)^{-1/2}.
\end{equation*}
Note that
\begin{equation*}
\min_i\{ (\mathbf{D}_{1} \mathbf{B}_{1})_{ii} \}  \lambda(\delta_0^* \delta_0) \leq \lambda(\delta_0^* \mathbf{D}_{1} \mathbf{B}_{1} \delta_0) \leq \max_i\{ (\mathbf{D}_{1} \mathbf{B}_{1})_{ii} \}  \lambda(\delta_0^* \delta_0).
\end{equation*}
This implies
\begin{equation*}
\min_i\{ (\mathbf{D}_{1} \mathbf{B}_{1})_{ii} \}  \lambda_{\min}(\delta_0^* \delta_0) \leq c_{P,0}^{-2} \leq \max_i\{ (\mathbf{D}_{1} \mathbf{B}_{1})_{ii} \}  \lambda_{\min}(\delta_0^* \delta_0),
\end{equation*}
which relates the Poincar\'{e} constant with $\lambda_{\min}(\delta_0^*\delta_0)$, also known as the Fielder value of the graph Laplacian~$L_0 = \delta_0^* \delta_0$. Classical works provide bounds upon the Fiedler eigenvalue in terms of the size and degree of a given graph, see for example \cite{chung1997spectral}.

Another consequence of the Hodge decomposition~\eqref{eqn:hodge-decomp-new} is the invertibility of the Hodge Laplacian $\Delta_k$ once we account for its kernel.

\begin{thm}[Invertibility of Hodge Laplacian] \label{thm:invert-Hodge-Laplacian}
The $k^{th}$-order Hodge Laplacian $\Delta_k$ is positive-semidefinite, with the dimension of its null-space equal to the dimension of the corresponding homology $H^k = \operatorname{ker}(\mathsf{d}_k)/ \operatorname{im}(\mathsf{d}_{k-1})$.
\end{thm}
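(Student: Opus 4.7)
The plan is to adapt the classical Hodge theory argument to the weighted inner product $(\cdot,\cdot)_k = (\cdot,\cdot)_{\mathbf{D}_k \mathbf{B}_k^{-1}}$ used in~\eqref{eqn:hodge-decomp-new}. The prerequisite for everything else is to verify that $\mathsf{d}_k^*$ is genuinely the Hilbert-space adjoint of $\mathsf{d}_k$ in this pairing, that is $(\mathsf{d}_k u, v)_{k+1} = (u, \mathsf{d}_k^* v)_k$ for all $u\in C^k$, $v\in C^{k+1}$. A short direct calculation using the definitions~\eqref{eqn:discreteOps0} and the fact that $\mathbf{B}_k, \mathbf{D}_k$ are diagonal (hence commute) reduces both sides to $u^{\top} \mathbf{B}_k^{-1} \delta_k^* \mathbf{D}_{k+1} v$. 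This adjoint identity immediately renders $\Delta_k$ self-adjoint in $(\cdot,\cdot)_k$ and yields
\begin{equation*}
(\Delta_k u, u)_k = \|\mathsf{d}_{k-1}^* u\|_{k-1}^2 + \|\mathsf{d}_k u\|_{k+1}^2 \geq 0,
\end{equation*}
establishing positive semi-definiteness, and further giving the harmonic characterization $u \in \operatorname{ker}(\Delta_k)$ if and only if $\mathsf{d}_k u = 0$ and $\mathsf{d}_{k-1}^* u = 0$ simultaneously.

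The second step identifies $\operatorname{ker}(\Delta_k)$ with the homology quotient via the Hodge decomposition~\eqref{eqn:hodge-decomp-new}. By Theorem~\ref{thm:exseq1}, $\operatorname{im}(\mathsf{d}_{k-1}) \subset \operatorname{ker}(\mathsf{d}_k)$, and by the harmonic characterization above, $\operatorname{ker}(\Delta_k) \subset \operatorname{ker}(\mathsf{d}_k)$. The third summand $\operatorname{im}(\mathsf{d}_k^*)$ meets $\operatorname{ker}(\mathsf{d}_k)$ only trivially: if $w = \mathsf{d}_k^* v$ satisfies $\mathsf{d}_k w = 0$, then $\|w\|_k^2 = (\mathsf{d}_k^* v, w)_k = (v, \mathsf{d}_k w)_{k+1} = 0$. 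Decomposing an arbitrary $u \in \operatorname{ker}(\mathsf{d}_k)$ via~\eqref{eqn:hodge-decomp-new} and applying $\mathsf{d}_k$, its $\operatorname{im}(\mathsf{d}_k^*)$-component must vanish, so
\begin{equation*}
\operatorname{ker}(\mathsf{d}_k) = \operatorname{im}(\mathsf{d}_{k-1}) \,{\bigoplus}_k\, \operatorname{ker}(\Delta_k).
\end{equation*}
Quotienting by $\operatorname{im}(\mathsf{d}_{k-1})$ then produces the canonical isomorphism $H^k \cong \operatorname{ker}(\Delta_k)$, which gives the claimed dimension equality.

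The only delicate point in the plan is the opening adjoint identity, since the weighted inner product is non-standard and the adjoint involves carefully matching the two families of scalings $\mathbf{B}_k$ and $\mathbf{D}_k$. This is precisely what singles out $(\cdot,\cdot)_{\mathbf{D}_k \mathbf{B}_k^{-1}}$ as the ``correct'' inner product for the data-driven operators; once the identity is established, the remainder of the proof is a formal consequence of the Hodge decomposition~\eqref{eqn:hodge-decomp-new} together with the exact-sequence property from Theorem~\ref{thm:exseq1}.
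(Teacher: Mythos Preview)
Your proposal is correct and follows essentially the same route as the paper: both arrive at the key identity $(\Delta_k u,u)_k=\|\mathsf{d}_{k-1}^*u\|_{k-1}^2+\|\mathsf{d}_k u\|_{k+1}^2$ (you via the adjoint relation $(\mathsf{d}_k u,v)_{k+1}=(u,\mathsf{d}_k^*v)_k$, the paper via direct matrix expansion, which are the same computation organized differently), and both invoke the Hodge decomposition~\eqref{eqn:hodge-decomp-new} for the null-space dimension. You spell out the passage from~\eqref{eqn:hodge-decomp-new} to $\operatorname{ker}(\mathsf{d}_k)=\operatorname{im}(\mathsf{d}_{k-1})\oplus_k\operatorname{ker}(\Delta_k)$ in more detail than the paper, which simply asserts that the second claim ``follows directly'' from the decomposition.
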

\begin{proof}
For $\mathbf{u}_k \in C^k$, we have
\begin{align*}
 & \quad (\Delta_k \mathbf{u}_k, \mathbf{u}_k)_{k} \\
 & = ((\mathbf{B}_{k} \delta_{k-1} \mathbf{B}_{k-1}^{-1} \ \mathbf{D}_{k-1}^{-1} \delta_{k-1}^* \mathbf{D}_{k} +  \mathbf{D}_{k}^{-1} \delta_k^* \mathbf{D}_{k+1} \ \mathbf{B}_{k+1} \delta_k \mathbf{B}_{k}^{-1})\mathbf{u}_k, \mathbf{u}_k)_{\mathbf{D}_k \mathbf{B}_k^{-1}} \\
 & = ((\mathbf{D}_{k} \delta_{k-1} \mathbf{B}_{k-1}^{-1} \ \mathbf{D}_{k-1}^{-1} \delta_{k-1}^* \mathbf{D}_{k} +  \mathbf{B}_{k}^{-1} \delta_k^* \mathbf{D}_{k+1} \ \mathbf{B}_{k+1} \delta_k \mathbf{B}_{k}^{-1})\mathbf{u}_k, \mathbf{u}_k) \\
 & = ( \mathsf{d}_{k-1}^* \mathbf{u}_k,  \mathsf{d}_{k-1}^* \mathbf{u}_k)_{k-1} + (\mathsf{d}_k \mathbf{u}_k, \mathsf{d}_k \mathbf{u}_k)_{k+1}
\end{align*}
which shows that $\Delta_k$ is positive-semidefinite.  The second part follows directly from the Hodge decomposition~\eqref{eqn:hodge-decomp-new}.
\end{proof}

Theorem~\ref{thm:invert-Hodge-Laplacian} means that the following linear system of the Hodge Laplacian 
\begin{equation} \label{eqn:Hodge-Laplacian}
\Delta_k \mathbf{u}_k = \mathbf{f}_k,
\end{equation}
is solvable as long as $\mathbf{f}_k \in C^k$ is orthogonal to $\operatorname{ker}(\Delta_k)$ with respect to the $(\cdot, \cdot)_{k}$-inner product.

\subsection{Nonlinear Perturbation of Hodge-Laplacian Problems} \label{sec:nonlinear-pertubation}
In many cases, it is helpful to consider the mixed form of the Hodge-Laplacian problem~\eqref{eqn:Hodge-Laplacian} as it naturally provides connections to integral balance laws and conservation principles~\cite{ArnoldFalkWinther2006b,ArnoldFalkWinther2010}.  To this end, let us introduce a new variable 
$\mathbf{w}_{k-1} : = \mathsf{d}_{k-1}^* \mathbf{u}_k$ and the mixed form of \eqref{eqn:Hodge-Laplacian} as follows,
\begin{align}
\mathbf{w}_{k-1} - \mathsf{d}_{k-1}^* \mathbf{u}_{k} &= 0, \label{eqn:mixed-Hodge-Laplacian-1}\\
\mathsf{d}_{k-1}  \mathbf{w}_{k-1}  +  \mathsf{d}_k^* \ \mathsf{d}_k \mathbf{u}_k &= \mathbf{f}_k. \label{eqn:mixed-Hodge-Laplacian-2}
\end{align}
This class of problems describes several canonical second-order elliptic operators; for example, in $\mathbb{R}^2$, for $k=2$ we obtain the Darcy flow model
\begin{center}
\begin{tabular}{c c c}
$\mathbf{F} + \kappa \nabla \phi = 0$ & $\rightarrow$ & $\mathbf{w}_1 - GRAD^* \mathbf{u}_0 = 0$\\
    $\nabla \cdot \mathbf{F} = f$ & & $DIV \mathbf{w}_1 = \mathbf{f}_0,$
\end{tabular}
\end{center}
and for $k=1$ we obtain the magnetostatics model, after applying a vector potential for the magnetic field and applying a suitable gauge condition \cite{BochevHuEtAl2008_AlgebraicMultigridApproachBased,bossavit2001stiff}.
\begin{center}
\begin{tabular}{c c c}
$\nabla \times \mathbf{J} = \mathbf{f}$ & $\rightarrow$ & $\mathbf{w}_0 - CURL^* \mathbf{u}_1 = 0$,\\
$\nabla \cdot \mathbf{B} = 0$ & & $ CURL \mathbf{w}_0 + GRAD^* \circ DIV \mathbf{u}_1 = \mathbf{f}_1.$\\
$\mathbf{J} = \mu \mathbf{B}$ & &
\end{tabular}
\end{center}



While this model form is appropriate for learning, e.g. diffusion coefficients corresponding to second-order elliptic problems, realistic problems require accounting for nonlinearities. With this in mind, we introduce a nonlinear perturbation of the fluxes while leaving the relevant conservation statements untouched. This preserves the exterior calculus structure while incorporating data into fluxes only, which are traditionally more empirical. Any parameterization may be used for the nonlinearities, but we consider deep neural networks. As a result, we obtain a nonlinear perturbation of a Hodge-Laplacian problem in the mixed form as follows,
\begin{align}
&\mathbf{w}_{k-1} = \mathsf{d}^*_{k-1} \mathbf{u}_{k} + \epsilon \mathcal{NN}(\mathsf{d}^*_{k-1} \mathbf{u}_{k}; \xi), \label{eqn:perturb-mixed-1}\\
&\mathsf{d}_{k-1} \mathbf{w}_{k-1}   +  \mathsf{d}_k^* \ \mathsf{d}_k \mathbf{u}_k = \mathbf{f}_k. 
\label{eqn:perturb-mixed-2}
\end{align}
The corresponding primal form is
\begin{equation}\label{eqn:modelForm}
\Delta_k \mathbf{u}_k + \epsilon \mathsf{d}_{k-1} \circ \mathcal{NN}(\mathsf{d}^*_{k-1} \mathbf{u}_{k}; \xi) = \mathbf{f}_k
\end{equation}
Later in this section, we will theoretically show that when $\epsilon>0$ is sufficiently small, the nonlinear problem~\eqref{eqn:perturb-mixed-1} and \eqref{eqn:perturb-mixed-2} remains well-posed.  First, let us look at some examples. In $\mathbb{R}^2$, when $k=2$, we have
\begin{align*}
DIV \circ GRAD^* \mathbf{u}_0 + \epsilon DIV \circ \mathcal{NN}(GRAD^* \mathbf{u}_{0}; \xi) = \mathbf{f}_0 \\
= \Delta_0 \mathbf{u}_2 + \epsilon DIV \circ \mathcal{NN}(GRAD^* \mathbf{u}_{0}; \xi) = \mathbf{f}_0 
\end{align*}
and when $k=1$, we have
\begin{align*}
GRAD^* \circ DIV \mathbf{u}_1 + CURL \circ CURL^* \mathbf{u}_1 + \epsilon CURL \circ \mathcal{NN}(CURL^* \mathbf{u}_{1}; \xi) = \mathbf{f}_1,\\
= \Delta_1 \mathbf{u}_1 + \epsilon CURL \circ \mathcal{NN}(CURL^* \mathbf{u}_{1}; \xi) = \mathbf{f}_1.
\end{align*}

\subsection{Well-posedness} \label{sec:well-posedness}
Next we investigate the well-posedness of the perturbed Hodge-Laplacian problem~\eqref{eqn:perturb-mixed-1}-\eqref{eqn:perturb-mixed-2}.  We write the perturbed problem in the primal form, i.e.,
\begin{equation} \label{eqn:pertub-primal-Hodge}
\Delta_k \mathbf{u}_k + N(\mathbf{u}_k) = \mathbf{f},
\end{equation}
where $N(\mathbf{u}_k):= \epsilon \mathsf{d}_{k-1} \circ \mathcal{NN}(\mathsf{d}^*_{k-1} \mathbf{u}_k; \xi)$.  Consider the space  $\mathbb{V} = C^k \backslash \operatorname{ker}(\Delta_k)$, we introduce the following weak formulation of~\eqref{eqn:pertub-primal-Hodge}, 
\begin{equation}\label{eqn:pertub-mixed-Hodge-weak}
a(\mathbf{u}_k, \mathbf{v}) + \epsilon (\mathcal{NN}(\mathsf{d}_{k-1}^*\mathbf{u}_k), \mathsf{d}_{k-1}^* \mathbf{v})_{k-1} = (\mathbf{f}, \mathbf{v})_k, \quad \mathbf{v} \in \mathbb{V},
\end{equation}
where
\begin{equation*}
a(\mathbf{u}, \mathbf{v}) 
:= ( \mathsf{d}^*_{k-1} \mathbf{u}, \mathsf{d}^*_{k-1} \mathbf{v} )_{k-1}+  ( \mathsf{d}_k \mathbf{u}, \mathsf{d}_k \mathbf{v})_{k+1}, \quad \forall \, \mathbf{u}, \, \mathbf{v} \in \mathbb{V},
\end{equation*}
and its induced norm is $\| \mathbf{u} \|_a := \sqrt{a(\mathbf{u}, \mathbf{u})} = \sqrt{ \| \mathsf{d}_{k-1}^* \mathbf{u}\|_{k-1} + \| \mathsf{d}_k \mathbf{u} \|_{k+1}^2 }$.  We assume Liptschitz continuity of the nonlinear perturbation, i.e. that there exists a constant $L_N > 0$, such that,
\begin{equation}\label{assump:Lip}
\| \mathcal{NN}(\mathbf{v}_{k-1}) - \mathcal{NN}(\mathbf{w}_{k-1}) \|_{k-1} \leq L_N \| \mathbf{v}_{k-1} - \mathbf{w}_{k-1} \|_{k-1}, \quad \forall \mathbf{v}_{k-1}, \, \mathbf{w}_{k-1} \in C^{k-1}.
\end{equation}
In addition, we also assume that
\begin{equation} \label{assump:zero-flux}
\mathcal{NN}(\mathbf{0}) = \mathbf{0},
\end{equation}
which means that the nonlinear perturbation reduces to a linear gradient closure in the limit as the gradient becomes small.

The main tool we use is the Leray-Schauder fixed point theorem~\cite{gilbarg2015elliptic} 
We define $T: \mathbb{V} \mapsto \mathbb{V}$ such that for each $\mathbf{w} \in \mathbb{V}$, $\widetilde{\mathbf{u}} : = T(\mathbf{w}) \in \mathbb{V}$ is given as the solution of the following linear problem,
\begin{equation}
a(\widetilde{\mathbf{u}}, \mathbf{v}) + \epsilon(\mathcal{NN}( \mathsf{d}_{k-1}^*\mathbf{w}), \mathsf{d}_{k-1}^* \mathbf{v})_{k-1} = (\mathbf{f},\mathbf{v})_k, \quad \forall \mathbf{v} \in \mathbb{V}.
\end{equation}
The map $T$ is clearly continuous and, therefore, compact in the finite dimensional space $\mathbb{V}$.  The sovability of~\eqref{eqn:pertub-mixed-Hodge-weak} is thus equivalent to the solvability of the equation $\mathbf{u}_k = T(\mathbf{u}_k)$ in $\mathbb{V}$, which is a fixed point problem.

If $\lambda > 0$ and $\mathbf{w}$ satisfies $T(\mathbf{w}) = \lambda \mathbf{w}$, then
\begin{equation*}
\lambda a(\mathbf{w},\mathbf{v}) + \epsilon(\mathcal{NN}(\mathsf{d}_{k-1}^*\mathbf{w}),\mathsf{d}_{k-1}^*\mathbf{v})_{k-1} = (\mathbf{f}, \mathbf{v})_k, \quad \forall \mathbf{v} \in \mathbb{V}.
\end{equation*}
By choosing $\mathbf{v} = \mathbf{w}$, we obtain
$$
\lambda \| \mathbf{w} \|_a^2 \leq \| \mathbf{f} \|_{-a} \| \mathbf{w} \|_a + \epsilon \| \mathcal{NN}(\mathsf{d}_{k-1}^* \mathbf{w}) \|_{k-1} \| \mathsf{d}_{k-1}^* \mathbf{w} \|_{k-1} 
$$
By the Liptschitz continuity assumption~\eqref{assump:Lip} and assumption~\eqref{assump:zero-flux}, we have
\begin{align*}
\| \mathcal{NN}(\mathsf{d}_{k-1}^* \mathbf{w}) \|_{k-1} = \| \mathcal{NN}(\mathsf{d}_{k-1}^* \mathbf{w}) - \mathcal{NN}(\mathbf{0}) \|_{k-1} \leq L_N \| \mathsf{d}_{k-1}^* \mathbf{w}\|_{k-1}.
\end{align*}
Using the fact that $\| \mathsf{d}_{k-1}^* \mathbf{w} \|_{k-1} \leq \| \mathbf{w} \|_a$, we have
$$
\lambda \| \mathbf{w} \|_a^2 \leq \left( \| \mathbf{f}\|_{-a} + \epsilon L_N \| \mathbf{w} \|_a \right) \| \mathbf{w} \|_a,
$$
therefore,
$$
\lambda \leq  \frac{\| \mathbf{f}\|_{-a} + \epsilon L_N \| \mathbf{w} \|_a}{\|\mathbf{w} \|_a}.
$$
Thus, $\lambda < 1$ holds true for any $\mathbf{w}$ being on the boundary of the ball in $\mathbb{V}$ centered at the origin with radius 
$$
\rho \geq \frac{\|\mathbf{f} \|_{-a}}{(1 -  \epsilon L_N)}.
$$ 
Consequently, the Leray-Schauder fixed point theorem implies that the nonlinear map $T$ has a fixed point in any ball centered at the origin with radius $\rho \geq \frac{\|\mathbf{f} \|_{-a}}{(1 - \epsilon L_N)}$. This fixed point is a solution of the equation~\eqref{eqn:pertub-mixed-Hodge-weak}.  

\begin{thm}
 Assume~\eqref{assump:Lip} and~\eqref{assump:zero-flux} hold. If $\epsilon L_N<1$, the equation~\eqref{eqn:pertub-mixed-Hodge-weak} has an unique solution $\mathbf{u}_k \in \mathbb{V}$ satisfies 
	\begin{equation} \label{eqn:solu-bound}
	\| \mathbf{u}_k \|_a \leq \frac{\| \mathbf{f} \|_{-a}}{(1 - \epsilon L_N)}.
	\end{equation}
\end{thm}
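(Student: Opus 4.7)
The plan is to piece together three short arguments, since the discussion immediately preceding the theorem statement has already executed the bulk of the Leray--Schauder existence machinery. I would organize the proof into existence, uniqueness, and the a priori bound, in that order.

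For existence, I would simply record the conclusion of the Leray--Schauder computation already performed: the map $T$ is continuous on the finite-dimensional space $\mathbb{V}$ (hence compact), and the bound $\lambda \leq \tfrac{\|\mathbf{f}\|_{-a} + \epsilon L_N \|\mathbf{w}\|_a}{\|\mathbf{w}\|_a}$ rules out fixed points of $T(\mathbf{w}) = \lambda \mathbf{w}$ with $\lambda \geq 1$ once $\|\mathbf{w}\|_a > \tfrac{\|\mathbf{f}\|_{-a}}{1-\epsilon L_N}$. The Leray--Schauder theorem then furnishes $\mathbf{u}_k \in \mathbb{V}$ with $T(\mathbf{u}_k) = \mathbf{u}_k$, which solves \eqref{eqn:pertub-mixed-Hodge-weak}.

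For uniqueness, I would take two solutions $\mathbf{u}, \mathbf{u}' \in \mathbb{V}$, subtract their weak formulations, and test the resulting identity with $\mathbf{v} = \mathbf{u} - \mathbf{u}'$. This yields
\begin{equation*}
\|\mathbf{u}-\mathbf{u}'\|_a^2 = -\epsilon\bigl(\mathcal{NN}(\mathsf{d}_{k-1}^*\mathbf{u}) - \mathcal{NN}(\mathsf{d}_{k-1}^*\mathbf{u}'),\,\mathsf{d}_{k-1}^*(\mathbf{u}-\mathbf{u}')\bigr)_{k-1}.
\end{equation*}
Applying Cauchy--Schwarz, the Lipschitz assumption \eqref{assump:Lip}, and the trivial bound $\|\mathsf{d}_{k-1}^* \mathbf{v}\|_{k-1} \le \|\mathbf{v}\|_a$ built into the definition of $\|\cdot\|_a$, the right-hand side is controlled by $\epsilon L_N \|\mathbf{u}-\mathbf{u}'\|_a^2$, so $(1-\epsilon L_N)\|\mathbf{u}-\mathbf{u}'\|_a^2 \le 0$, forcing equality under the standing hypothesis $\epsilon L_N < 1$.

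For the a priori bound \eqref{eqn:solu-bound}, I would test \eqref{eqn:pertub-mixed-Hodge-weak} with $\mathbf{v} = \mathbf{u}_k$ itself. Using the hypothesis $\mathcal{NN}(\mathbf{0}) = \mathbf{0}$ to rewrite $\mathcal{NN}(\mathsf{d}_{k-1}^*\mathbf{u}_k) = \mathcal{NN}(\mathsf{d}_{k-1}^*\mathbf{u}_k) - \mathcal{NN}(\mathbf{0})$, the same Lipschitz--Cauchy--Schwarz combination used in the text gives $|\epsilon(\mathcal{NN}(\mathsf{d}_{k-1}^*\mathbf{u}_k), \mathsf{d}_{k-1}^*\mathbf{u}_k)_{k-1}| \le \epsilon L_N \|\mathbf{u}_k\|_a^2$, while the right-hand side is bounded by $\|\mathbf{f}\|_{-a}\|\mathbf{u}_k\|_a$ via the definition of the dual norm. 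Rearranging yields $(1-\epsilon L_N)\|\mathbf{u}_k\|_a \le \|\mathbf{f}\|_{-a}$, which is exactly \eqref{eqn:solu-bound}.

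I do not expect a serious obstacle: all three pieces reduce to the same absorption trick ($\epsilon L_N < 1$ dominates a nonlinear contribution by the energy norm). The one subtlety worth flagging is that the argument implicitly uses $\|\cdot\|_a$ being a genuine norm on $\mathbb{V}$, which follows from Theorem~\ref{thm:invert-Hodge-Laplacian} and the Poincar\'e inequalities of Section~\ref{sec:coboudary-codifferential} once $\operatorname{ker}(\Delta_k)$ is quotiented out; this is the only place where the structural theory of the preceding section enters.
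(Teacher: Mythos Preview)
Your proposal is correct and matches the paper's proof essentially line for line: existence is inherited from the Leray--Schauder argument already carried out, the a priori bound comes from testing with $\mathbf{v}=\mathbf{u}_k$ and absorbing the nonlinear term via \eqref{assump:Lip}--\eqref{assump:zero-flux}, and uniqueness follows by subtracting two solutions, testing with the difference, and using the same $\epsilon L_N<1$ absorption. The only cosmetic difference is ordering (the paper proves the bound before uniqueness).
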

\begin{proof}
	The existence has been discussed before the theorem. Let $\mathbf{u}_k$ be the solution and $\mathbf{v}=\mathbf{u}_k$ in~\eqref{eqn:pertub-mixed-Hodge-weak}, we have 
	$$
	 \| \mathbf{u}_k \|_a^2 \leq \left( \| \mathbf{f}\|_{-a} + \epsilon L_N \| \mathbf{u}_k \|_a \right) \| \mathbf{u}_k \|_a,
	$$
	which implies~\eqref{eqn:solu-bound}. 

Next we prove uniqueness of ~\eqref{eqn:pertub-mixed-Hodge-weak} under the same assumptions.  Let $\mathbf{u}_k$ and $\bar{\mathbf{u}}_k$ be two solutions of~\eqref{eqn:pertub-mixed-Hodge-weak}.  Denoting $\mathbf{e} = \mathbf{u}_k - \bar{\mathbf{u}}_k$, we have,
$$
a(\mathbf{e},\mathbf{v}) + \epsilon(\mathcal{NN}( \mathsf{d}_{k-1}^* \mathbf{u}_k), \mathsf{d}_{k-1}^*\mathbf{v})_{k-1} - \epsilon (\mathcal{NN}(\mathsf{d}_{k-1}^*\bar{\mathbf{u}}_k),\mathsf{d}_{k-1}^*\mathbf{v})_{k-1} = 0.
$$
Letting $\mathbf{v} = \mathbf{e}$ and assuming Lipschitz continuity,
\begin{equation}
\| \mathcal{NN}(\mathbf{v}_{k-1}) - \mathcal{NN}(\mathbf{w}_{k-1}) \|_{k-1} \leq L_N \| \mathbf{v}_{k-1} - \mathbf{w}_{k-1} \|_{k-1}, \quad \forall \mathbf{v}_{k-1}, \, \mathbf{w}_{k-1} \in C^{k-1},
\end{equation}
we arrive at,
\begin{align*}
\| \mathbf{e} \|_a^2 \leq \epsilon L_N \| \mathsf{d}_{k-1}^* \mathbf{e} \|_{k-1}^2 \leq \epsilon L_N \| \mathbf{e} \|^2_a
\end{align*}
which implies the uniqueness of the solution since $\epsilon L_N < 1$.  This completes the proof.
\end{proof}
We finally consider design of an architecture which satisfies this condition to ensure the extracted model is solvable, considering multilayer perceptrons as canonical architectures \cite{rosenblatt1961principles}.
\begin{remark}
Consider a $L$-layer neural net which has the following structure
\begin{equation*}
\mathcal{NN}(\mathbf{x}) := \psi_{L} \circ T_L \circ \cdots \circ \psi_1 \circ T_1(\mathbf{x})
\end{equation*}
where $T_{\ell}(\mathbf{x}_{\ell}) : = M_{\ell}\mathbf{x}_{\ell} + \mathbf{b}_{\ell}$ is an affine function and $\psi_{\ell}$ is a nonlinear activation function.  If we assume $\psi_{\ell}$ are $1$-Lipschitz nonlinear functions (e.g., ReLU, Leaky ReLU, tanh, sigmoid) and define $\mathcal{N}_{\ell} : = T_{\ell} \circ \psi_{\ell-1} \cdots \circ \psi_1 \circ T_1(\mathbf{x})$, then the Lipschitz constant $L_N$~\eqref{assump:Lip} of $\mathcal{NN}$ can be estimated as follows,
\begin{align*}
L_N &= \sup_{\mathbf{x} \in \mathbb{R}^n} \| \operatorname{diag}(\psi_L'(\mathcal{N}_L))M_L \cdots M_2 \operatorname{diag}(\psi_1'(\mathcal{N}_1))M_1 \|_{k-1} \\
& \leq \max_{i}\{(\mathbf{D}_{k-1}\mathbf{B}_{k-1}^{-1})_{ii}\} \sup_{\mathbf{x} \in \mathbb{R}^n} \| \operatorname{diag}(\psi_L'(\mathcal{N}_L))M_L \cdots M_2 \operatorname{diag}(\psi_1'(\mathcal{N}_1))M_1 \| \\
& \leq \max_{i}\{(\mathbf{D}_{k-1}\mathbf{B}_{k-1}^{-1})_{ii}\} \prod_{\ell=1}^{L} \| M_\ell \|
\end{align*}
Therefore, if we choose $0 < \epsilon < \left( \max_{i}\{(\mathbf{D}_{k-1}\mathbf{B}_{k-1}^{-1})_{ii}\} \prod_{\ell=1}^{L} \| M_\ell \| \right)^{-1}$, then the assumption $\epsilon L_n < 1$ would be satisfied. We also note that a more accurate upper bound of the Lipschitz constant $L_N$ can be computed numerically using the advanced algorithms developed in ~\cite{ScamanVirmaux2019}, providing a tighter bound on $\epsilon$. 
\end{remark}

\subsection{Construction of chain complex}

The model introduced in the previous section assumes access to an underlying graph to apply the DDEC to. Motivated by our surrogate modeling application, we assume access to a very fine polygonal mesh and access to a high-fidelity PDE solution defined as oriented moments of mesh entities (i.e. cell average scalar potentials, face average fluxes, edge average circulations). We next show how a coarsening of the underlying fine mesh preserves some structure allowing a particularly simple implementation; essentially, given graph grad/curl/div matrices on the fine mesh, we derive coarsening matrices that encode proper orientations for the coarsened complex and the desired $\mathsf{d}_k$ and $\mathsf{d}^*_k$ operators. We consider here graph-cut coarsening available in packages such as METIS \cite{karypis1997metis}, which partitions the domain $\Omega$ into $N_c$ disjoint volumetric subdomains (or $d$-cells), and derive appropriately oriented lower degree $k$-cells. In Figure \ref{fig:coarseningCartoon} we provide a cartoon of the process. While the following presentation provides a mathematical description of the process via a commutative diagram, it practically will provide a simple implementation of the coarse coboundary operators $\delta_k$ in terms of a few simple matrices, greatly simplifying implementation.

\begin{remark} This is one particular construction appropriate for synthetic data which takes advantage of available adjacency matrices of an underlying fine mesh. We stress however that one may apply the calculus to \textit{any} appropriately defined graph. For example, in experimental contexts the bins associated with histograms may be used instead, or the calculus may be applied to graphs with no associated partition of space at all (see e.g. \cite{jiang2011statistical}). We further comment that the graph-cut coarsening assumed here provides a quasi-uniform partition of space that does not take advantage of the data; in another work we consider spectral graph partitioning strategies to obtain data-driven partitions tuned to give optimal representations of data.
\end{remark}

\begin{figure}[!h]\label{fig:coarseningCartoon}
\centering
$
\vcenter{\hbox{\begin{tikzpicture}
\node[circle, fill,inner sep=1.5pt] (2) at (-1,-1) {};
\node[circle, fill,inner sep=1.5pt] (0) at (-2,0) {};
\node[circle, fill,inner sep=1.5pt] (1) at (-1.5,1.2) {};
\node[circle, fill,inner sep=1.5pt] (4) at (1,1) {};
\node[circle, fill,inner sep=1.5pt] (5) at (1.3,-0.75) {};
\node[circle, fill,inner sep=1.5pt] (3) at (0,0) {};

\draw[black] (0) -- (1);
\draw[black] (0) -- (2);
\draw[black] (0) -- (3);
\draw[black] (1) -- (3);
\draw[black] (1) -- (4);
\draw[black] (2) -- (3);
\draw[black] (2) -- (5);
\draw[black] (3) -- (4);
\draw[black] (3) -- (5);
\draw[black] (4) -- (5);

\begin{pgfonlayer}{background}
\fill[red!15] (0.center) -- (1.center) -- (3.center);
\fill[red!15] (0.center) -- (2.center) -- (3.center);
\fill[blue!15] (1.center) -- (3.center) -- (4.center);
\fill[blue!15] (2.center) -- (3.center) -- (5.center);
\fill[blue!15] (3.center) -- (4.center) -- (5.center);
\end{pgfonlayer}
\end{tikzpicture}}}
\vcenter{\hbox{\hskip 0.6cm $\longrightarrow$\hskip 0.6cm }}
\vcenter{
	\hbox{\begin{tikzpicture}
\node[circle, fill,inner sep=1.5pt] (2) at (-1  ,   -1) {};
\node[circle, fill,inner sep=0.0pt] (0) at (-2  ,    0) {};
\node[circle, fill,inner sep=1.5pt] (1) at (-1.5,  1.2) {};
\node[circle, fill,inner sep=0.0pt] (4) at (1   ,    1) {};
\node[circle, fill,inner sep=0.0pt] (5) at (1.3 ,-0.75) {};
\node[circle, fill,inner sep=0.0pt] (3) at (0   ,    0) {};

\draw[black] (0) -- (1);
\draw[black] (0) -- (2);
\draw[black] (1) -- (3);
\draw[black] (1) -- (4);
\draw[black] (2) -- (3);
\draw[black] (2) -- (5);
\draw[black] (4) -- (5);

\begin{pgfonlayer}{background}
\fill[red!15] (0.center) -- (1.center) -- (3.center);
\fill[red!15] (0.center) -- (2.center) -- (3.center);
\fill[blue!15] (1.center) -- (3.center) -- (4.center);
\fill[blue!15] (2.center) -- (3.center) -- (5.center);
\fill[blue!15] (3.center) -- (4.center) -- (5.center);
\end{pgfonlayer}

\end{tikzpicture}}}
$\\
\vskip 12pt

\raisebox{-.5\height}{\includegraphics[width=0.4\linewidth]{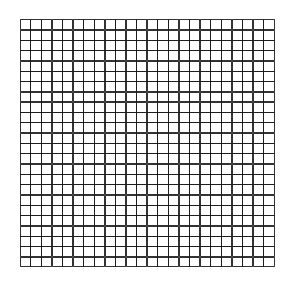}}
$\vcenter{\hbox{\hskip 0.6cm $\longrightarrow$\hskip 0.6cm }}$
\raisebox{-.5\height}{\includegraphics[width=0.4\linewidth]{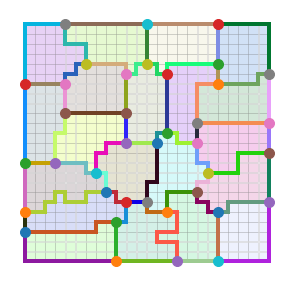}}

\caption{The coarsening process takes a fine mesh discretizing $\Omega$ in $d$-cells, partitions them into disjoint subdomains and uses the fine scale mesh to derive coarsened $k$-cells and coboundary operators $\partial_k$ with consistent orientations. Values on the coarsened $k$-cells correspond to sums of the constituent fine-grained moments with proper accounting of orientation. \textit{Top:} five $2$-cells coarsened into a pair of $2$-cells. \textit{Bottom:} a representative coarsening of a Cartesian mesh. Coarsened $0$-, $1$-, and $2$-cells are each colored differently and are a formal linear sum of elements of the fine mesh.}
\end{figure}



We assume the high-fidelity solution is associated with a $d$-dimensional chain complex, with $k$-cells denoted by $F_k$ for $k<d$ and $\partial_k^{fine}:F_{k+1}\rightarrow F_k$ as defined in Section \ref{sec:graphcalc}, and coboundary $\delta_k^{fine}:F^k\rightarrow F^{k+1}$ encoded via the adjacency matrices generally available in mesh data structures. Here, we use the symbol $F$ and superscript $fine$ to denote fine scale objects. Our goal is to construct a coarse complex, denoted by the symbol $C$, that can be used in the DDEC framework as we discussed in Section~\ref{sec:coboudary-codifferential}-\ref{sec:well-posedness}.  The construction contains two steps. First, we will coarsen the fine level set of the $k$-chain, $F_k$, to obtain the coarse level set of the $k$-chain, $C_k$. This can be done inductively by starting with coarsening the $d$-cells. Then we will define the coarse boundary and coboundary operators via relating the fine and coarse level complex properly. 

 
\paragraph{Constructing $C_k$ and $C^k$.} To inductively define $C_k$ from $F_k$, we start with coarsening the $d$-cells. Given a partitioning of the $N_f$ $d$-cells into $N_c$ disjoint, connected subsets, denoted by $F_d=\sqcup_{i=1}^{N_c} P_i$.
we define the space of coarse $d$-chain $C_d$ as a subspace of $F_d$ based on the partition $\{ P_i \}_{i=1}^{N_c}$ with the following natural inclusion
\begin{equation*}
\iota_d \in \mathbb{R}^{N_f \times N_c}, \quad (\iota_d)_{ij} =
\begin{cases}
1, \qquad f_i \in P_j, \\
0, \qquad f_i \notin P_j.
\end{cases}
\end{equation*}
This has a dual $C^d$ identified as a subspace of $F^d$. 

Now, consider the image of $C_d$ under $\partial_{d-1}^{fine}$, consisting of the fine $d$-chains lying at partition interfaces. We define the element $c_{ij}:= P_i \cap P_j$ (oriented consistently) for $i,j=1, 2,\ldots N_c$ and their span as the the coarse $(d-1)$ chains $C_{d-1}$.  Then we can proceed inductively.  With the chain and cochain spaces indexed by $d-k$, consider the image of $C_{d-k}$ under the map $\partial_{d-k-1}^{fine}$. This consists of the $(d-k-1)$-interfaces between the coarse $d-k$-chains. Define the element $$c_{i_1i_2 \cdots i_{d-k-1}}=P_{i_1}\cap P_{i_2}\cap \cdots \cap P_{i_{d-k-1}}$$ The span of these elements is the space of coarse $(d-k-1)$-chains $C_{d-k-1}$. As before, by identifying $C_{d-k-1}$ as a subset of $F_{d-k-1}$, we can define the dual cochains $C^{d-k-1}$. 

\begin{remark}
It may be the case that $c_{i_1 i_2 \cdots  i_{d-k-1}}$ is disjoint. As a choice of implementation detail, each of the connected components can be taken as a separate chain, or their disjoint union can be taken as a single chain. What results from this choice is the treatment of parallel fluxes between partitions as either: a sum of individual distinct fluxes between the partitions, or the sum of those fluxes as a single effective quantity between the partitions. We choose the former in our computational examples.
\end{remark}

By construction, we have $C_k \subset F_k$, for $k=0,\cdots, d$. Therefore, there is a natural inclusion $\iota_k: C_k \mapsto F_k$ whose entries are $1$, $-1$, or $0$ depending on the partition and orientation. Furthermore, we also have $C^k \subset F^k$ and the corresponding natural inclusions are defined as $\iota^k := \iota_{d-k}$, for $k=0, \cdots, d$.


\paragraph{Building boundary and coboundary operators.} 
In order to define the coarse boundary and coboundary operators that connects $C_k$ and $C^k$, we first define the linear projections $\pi_k: F_k \mapsto C_k$ as 
$$
\pi_k = \left(\iota_k^T \iota_k\right)^{-1} \iota_k^T,
$$ 
and $\pi^k: F^k \mapsto C^k$ as 
$$
\pi^k = \left(\left(\iota^k\right)^T \iota^k\right)^{-1} \left(\iota^k\right)^T.
$$ 
Note that, $\pi^k = \pi_{d-k}$ since $\iota^k = \iota_{d-k}$.

With these in hand, we define the coarse boundary operators as follows,
\begin{eqnarray*}
\partial_{k} :=& \pi_{k} \cdot \partial_{k}^{fine} \cdot \iota_{k+1},
\end{eqnarray*}
and the coarse coboundary operators 
\begin{eqnarray*}
\delta_{k} :=& \pi^{k+1} \cdot \delta_{k}^{fine} \cdot \iota^{k}.
\end{eqnarray*}

\begin{remark}
The linear projections $\pi_k$ and $\pi^k$ are least-squares projections. Since the natural inclusions $\iota_k$ and $\iota^k$ are constructed based on partitions, $\iota_k^T \iota_k$ and $(\iota^k)^T \iota^k$ are diagonal matrices with diagonal entries equal to the number of $k$-chains in the corresponding partition. Thus, in a practical implementation, we can simply use $\pi_k = \iota_k^T$ and $\pi^k = (\iota^k)^T$ and, based on the definition~\eqref{eqn:discreteOps0}, $(\iota_k^T \iota_k)^{-1}$ and $((\iota^k)^T \iota^k)^{-1}$ are implicitly absorbed in $\mathbf{B}_k$ and $\mathbf{D}_k$. 
\end{remark}



To summarize, in the case of $d=2$, all of the spaces discussed are related through the following commutative diagram:
\begin{equation}\label{eqn:coarseningComplex}
\begin{tikzcd}[row sep=1.6em,column sep=1.6em]
0 \arrow[rr] & & F^0 \arrow[rr, "\delta_0^{fine}"] \arrow[dr,shift right, swap,"\pi^0"] \arrow[dd,swap,"", near start] && F^1 \arrow[rr,"\delta_1^{fine}"] \arrow[dd,swap,"" near start] \arrow[dr,shift right, swap,"\pi^1"] && F^2 \arrow[dd,swap,"" near start] \arrow[dr,shift right, swap,"\pi^2"] \arrow[rr] & & 0\\
& 0 \arrow[rr,, crossing over] & & C^0 \arrow[rr,crossing over,"\delta_0" near start] \arrow[ul,shift right,swap,->,"\iota^0"] && C^1 \arrow[rr,crossing over,"\delta_1" near start] \arrow[ul,shift right,swap,->,"\iota^1"] && C^2 \arrow[rr] \arrow[ul,shift right,swap,->,"\iota^2"] & & 0 \\
0 & & F_0 \arrow[ll] \arrow[dr,shift right, swap,"\pi_0"] && F_1 \arrow[ll,"\partial_0^{fine}" near start] \arrow[dr,shift right, swap,"\pi_1"] && F_2 \arrow[dr,shift right, swap,"\pi_2"] \arrow[ll,"\partial_1^{fine}" near start] & & \arrow[ll] 0\\
& 0 & & C_0 \arrow[ll] \arrow[uu,<-,crossing over,"" near end] \arrow[ul,shift right,swap,->,"\iota_0"] && C_1 \arrow[ll,"\partial_0"] \arrow[uu,<-,crossing over,"" near end] \arrow[ul,shift right,swap,->,"\iota_1"] && C_2 \arrow[ll,"\partial_1"] \arrow[uu,<-,crossing over,"" near end] \arrow[ul,shift right,swap,->,"\iota_2"] & & \arrow[ll] 0
\end{tikzcd}
\end{equation}
The top-front row of this diagram corresponds to the middle row of the chain complex \eqref{eqn:discreteDiagram}. The DDEC complex results from perturbing these operators as in \eqref{eqn:discreteOps0}.

\section{PDE-constrained optimization}

We finally turn toward the question of how to fit a model of the form~\eqref{eqn:modelForm} to data. We assume access to data via the coarsening process of the previous section, and denote by $\mathbf{u}_{data}$ a vector concatenating the coarsened $\mathbf{w}_{k+1}$ and $\mathbf{u}_k$ degrees of freedom. We may have only partial data: observations of possibly a single field $\mathbf{w}_{k+1}$ or $\mathbf{u}_k$, or observations on a subset of the chain complex. We then concisely express the boundary value problem in~\eqref{eqn:modelForm} via the nonlinear operator $\mathcal{L}_\xi [\mathbf{u}] = \mathbf{f}$, where we have lumped all model parameters into $\xi$ (i.e. $\mathbf{B}_k$, $\mathbf{D}_k$, and neural network weights and biases), and refer to this as the \textit{forward problem}. We postpone a problem-specific discussion of how boundary conditions will be imposed for the following section, and assume the forward problem is prescribed such that BCs are imposed naturally. We seek a solution to the following quadratic program with nonlinear equality constraints

\begin{align}\label{eq:equalityQP}
    \underset{\xi}{\text{argmin}} ||\mathbf{u} - \mathbf{u}_{data}||^2\\
    \text{such that } \mathcal{L}_\xi[\mathbf{u}] = \mathbf{f},
\end{align}
where $||\cdot||$ denotes the $\ell_2$ norm. 

To avoid handling the equality constraint, one may introduce a single scalar penalty parameter $\lambda$ and use a gradient descent optimizer to solve
\begin{align}
    \underset{\xi}{\text{argmin}} ||\mathbf{u} - \mathbf{u}_{data}||^2 + \lambda ||\mathcal{L}_\xi[\mathbf{u}] - \mathbf{f}||^2.
\end{align}
This approach resembles currently popular approaches such as physics-informed neural networks and is simple to implement in machine learning libraries using automatic differentiation to implement first-order optimization schemes. However, $\lambda$ becomes a hyperparameter introducing well-known issues with training, and ultimately results in $\mathcal{L}_\xi[\mathbf{u}]=\mathbf{f}$ holding only to within optimization error. We instead enforce the equality constraint exactly, introducing Lagrange multipliers $\mathbf{\lambda}$ as follows.

\begin{align}\label{eq:equalityQP2}
    \underset{\xi,\mathbf{u},\mathbf{\lambda}}{\text{argmin}}\, \mathbf{L}_\xi(\mathbf{u},\mathbf{\lambda})\\
        \text{such that } \mathcal{L}_\xi[\mathbf{u}] = \mathbf{f},\\
    \text{where }\mathbf{L}_\xi(\mathbf{u},\mathbf{\lambda}) = ||\mathbf{u} - \mathbf{u}_{data}||^2 + \mathbf{\lambda}^\intercal \left( \mathcal{L}_\xi[\mathbf{u}] - \mathbf{f} \right)
\end{align}
Stationarity of the Karush-Kuhn Tucker conditions requires that the gradient of $ \mathbf{L}_\xi(\mathbf{u},\mathbf{\lambda})$ with respect to $\mathbf{u}$, $\mathbf{\lambda}$ and $\xi$ be zero. This yields the following set of three necessary conditions for a minimizer of~\eqref{eq:equalityQP2}, which we will iteratively solve. If this fixed point iteration converges and all three are satisfied, than one has obtained a minimizer.
\begin{itemize}
    \item \textbf{Forward problem:} The condition $\partial_\mathbf{\lambda} \mathbf{L}_\xi(\mathbf{u},\mathbf{\lambda}) = 0$ requires that the forward problem is solved: $\mathcal{L}_\xi[\mathbf{u}]=\mathbf{f}$. Assuming $\xi$ fixed, one may solve with a Newton-Rhapson method, requiring calculation of the Jacobian of $\mathcal{L}_\xi$, which we denote $J_{\xi,\bm{u}}$. Following the analysis in Section \ref{DDEC}, this problem is guaranteed to be solvable provided $\epsilon L_n < 1$.
    \item \textbf{Adjoint problem:} Enforcing $\partial_\mathbf{u} \mathbf{L}_\xi(\mathbf{u},\mathbf{\lambda}) = 0$ provides the linear adjoint problem $J_{\xi,\bm{u}}^\intercal \mathbf{\lambda} = -2(\mathbf{u} - \mathbf{u}_{data})$ for the Lagrange multipliers. Having solved the forward problem in the previous step, the Jacobian is already available.
    \item \textbf{Model update:} The remaining condition $\partial_\xi \mathbf{L}_\xi(\mathbf{u},\mathbf{\lambda}) = 0$ does not readily admit solution with second-order optimizers due to the neural networks embedded in the nonlinear perturbations of~\eqref{eqn:pertub-primal-Hodge}. It is well-known that neural networks admit a complex optimization landscape due to linear dependence with many suboptimal local minima. With this in mind, we apply a single step of a first-order gradient optimizer instead, providing a small perturbation of the model at each iteration.
\end{itemize}

This process ensures that the physics imposed by the carefully designed model formed in the previous sections hold to machine precision at each iteration, even in scenarios with limited training data. Asymptotically, the added complexity compared to typical gradient-descent approaches is that of solving the forward problem at each iteration, which is generally inexpensive for the low-dimensional models under consideration and converges rapidly. The remaining complexity lies in calculating the relevant derivatives for Newton, which may be simply calculated with the same automatic differentiation used to perform the gradient descent step.

Considering that the assumed model form is nonlinear, it is necessary to train simultaneously on a variety of boundary conditions to learn the nonlinear response across a range of conditions - we must therefore assimilate multiple solutions to the nonlinear problem simultaneously. With that in mind, we present in Algorithm \ref{alg:opt} a batch-training strategy for handling training data $\mathcal{D} = \left\{\bm{u}_{data,i}\right\}_{i=1}^{N_{batches}}$ consisting of $N_{batches}$ solutions.

\RestyleAlgo{plainruled}
\begin{algorithm}[t]\label{alg:opt}
 \KwData{Training solutions $\mathcal{D}$, parameter initialization $\xi_0$, tolerance $\epsilon_{TOL}$}
 \KwResult{$\xi$}
 \For{$e \in \{1,...,\texttt{\textup{epochs}}\}$}{
      $\mathbf{u} \gets \mathbf{0}$\;
 \For{$\mathbf{u}_{data,i} \in \mathcal{D}$}{
      \textit{Solve forward problem}\;
      \While{$||\mathcal{L}_\xi[\mathbf{u}] - f|| > \epsilon_{TOL}$}{
        $J_{\xi,\bm{u}} \gets \nabla_{\mathbf{u}}\mathcal{L}_\xi(\mathbf{u})$\;
        Solve $\Delta = -(J_{\xi,\bm{u}})^{-1} \mathcal{L}_\xi(\mathbf{u})$\;
        $\mathbf{u} \gets \mathbf{u} + \Delta$\;
      }
      \textit{Solve adjoint problem}\;
      $\lambda \gets 2 (J_{\xi,\bm{u}})^{-\intercal}  \left( \mathbf{u}_{data,i} - \mathbf{u}\right)$\;
      \textit{Apply gradient descent update}\;
      $\xi \gets GD(\mathbf{L}_\xi(\mathbf{u},\mathbf{\lambda}),\xi)$\;
  }
  }
 \caption{Application of equality constrained optimizer batched over multiple solutions $\mathbf{u}_{i,data}$. By $GD(l,p)$ we denote a gradient descent update of the parameters $p$ to minimize the loss $l$. For this work we apply the Adam optimizer \cite{kingma2014adam}.}
\end{algorithm}

\section{Numerical results}

\begin{figure}[h]
\centering
\includegraphics[width=0.99\linewidth]{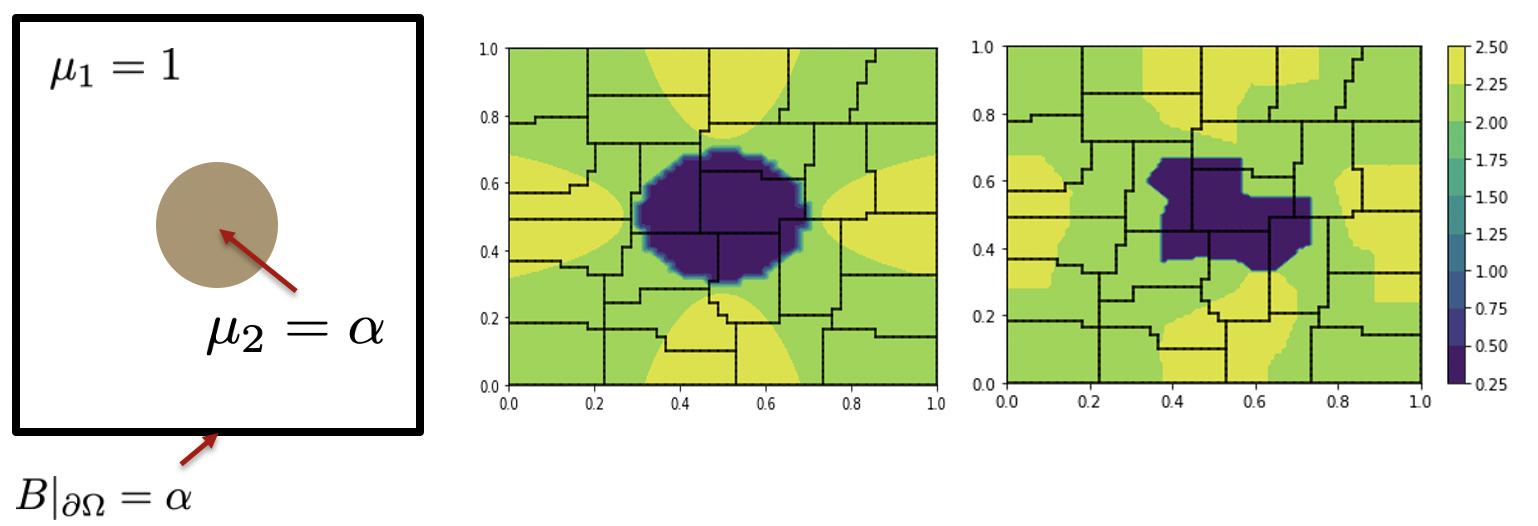}
\caption{We consider a cylindrical inclusion in a 2D domain for steady diffusion problems in subsurface flow and magnetostatics \textit{(left)}, considering diffusion coefficients which vary with the boundary conditions magnitude to obtain a nonlinear response. The high-fidelity solution is coarsened, obtaining a target solution on the corresponding $k-$chains \textit{(center)}. The coarsened graph model is trained to reproduce moments on each subdomain \textit{(right)}. Pictured here is the resulting magnetic field defined on coarsened $0-$chains, with nearest neighbor interpolant of surrogate solution.}
\end{figure}

In the remainder we present results for two canonical $H(div)$ and $H(curl)$ problems from subsurface flow and magnetostatics. In both examples we consider a cylindrical inclusion of radius $a$ embedded within the unit square centered at the origin, whose material properties are prescribed by
\begin{equation}
    \mu_\alpha(\mathbf{x})= 
\begin{cases}
    \alpha              ,& \text{if } ||\mathbf{x}|| < a\\
    1,              & \text{otherwise}
\end{cases}
\end{equation}
Treatment of the material interface at $||\mathbf{x}|| = a$ without interesting spurious oscillations is a hallmark of mimetic discretizations and stems for the exact treatment of interface conditions via Stokes theorem. 

For both problems we will impose boundary conditions by identifying appropriate cochains on the boundary, replacing their corresponding row of the Jacobian matrix with a zero vector with one on the diagonal and setting the desired value on the right hand side. For the introductory nature of this paper this is sufficient, but we note that the imposition of boundary conditions is a rich topic in the discrete exterior calculus literature, with more complex applications requiring a deeper consideration of the interaction between boundary conditions and the discrete exterior calculus \cite{codecasa2014refoundation,kreeft2011mimetic,beltman2018conservative}.

\subsection{Problem 1: Darcy} We consider as training data solutions to the system of equations

\begin{align*}
\mathbf{F} + \mu_\alpha \nabla \phi = 0& \qquad \bm{x} \in \Omega \\
\nabla \cdot \mathbf{F} = f& \qquad \bm{x} \in \Omega\\
\mathbf{F}\cdot \hat{n} = g& \qquad \bm{x} \in \partial \Omega
\end{align*}
obtained via the scheme in \cite{nicolaides2006covolume}. Boundary conditions are imposed upon the 1-cochains encoding the fluxes of $\mathbf{F}$ through subdomain boundaries, and we stress that the resulting DEC method will guarantee that $\int_\Omega \nabla\cdot\mathbf{F} dx = \int_{\partial\Omega} g dA$. For this problem we consider as neural network a dense elu network \cite{clevert2015fast} with two hidden layers of width five, initialized with the He initializer \cite{he2015delving}. We consider a $50^2$ fine mesh to generate training data.

For this problem we will consider two scenarios.\\

\noindent\textbf{Darcy problem 1 (D1):} We take $f=0$ and $g = <1,0> \cdot <1,0>$ consistent with applying a unit horizontal flux and study the effect of varying $\alpha$. For a fixed $\alpha$ the PDE is linear and will not require a contribution from the neural network, and we will use this to gauge the methods ability to recover a PDE discretization consistent with a single solution (i.e. $N_{data} = 1$). Figure \ref{fig:lindarcy} provides a summary and discussion of results.\\

\begin{figure}
\centering
\includegraphics[width=0.8\linewidth]{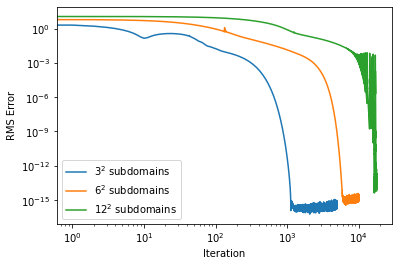}\\

\includegraphics[width=0.32\linewidth]{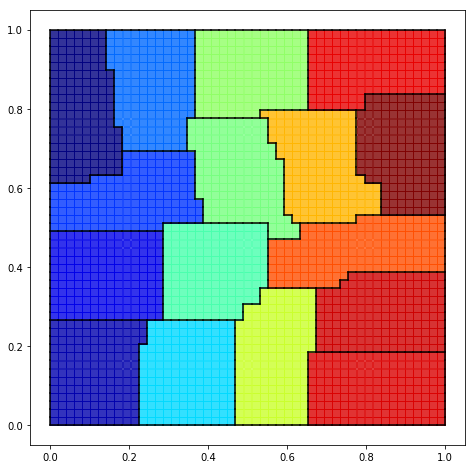}
\includegraphics[width=0.32\linewidth]{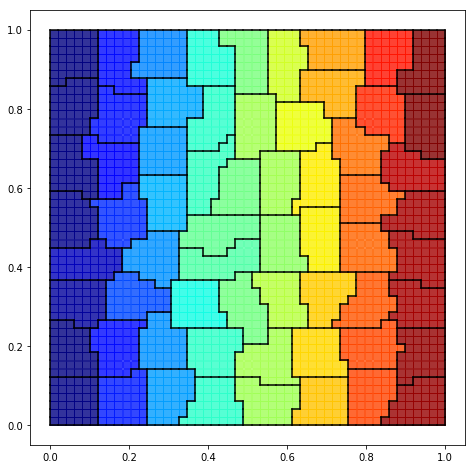}
\includegraphics[width=0.32\linewidth]{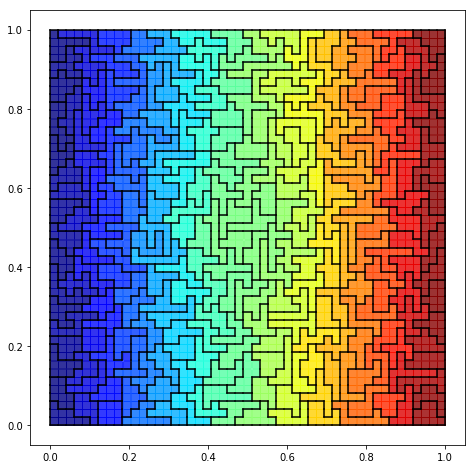}\\
\includegraphics[width=0.32\linewidth]{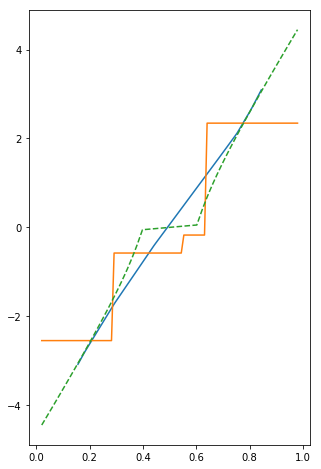}
\includegraphics[width=0.32\linewidth]{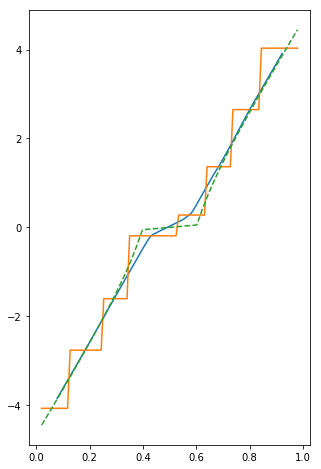}
\includegraphics[width=0.32\linewidth]{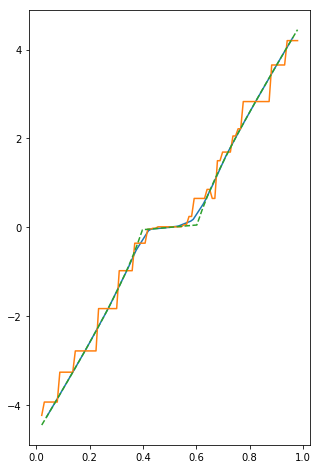}
\caption{Solutions for problem D1. \textit{Top:} RMS error vs number of subdomains in coarsened mesh, using learning rate of $0.05,0.005,0.0005$ for $3^2,6,^2,12^2$ subdomains, respectively, demonstrating ability to capture training data to machine precision. \textit{Center:} piecewise constant plots of pressure for obtained model. \textit{Bottom:} Comparison of pressure along $y=0.5$ for model solution (orange) and Delaunay interpolant (blue) to training solution (dashed green) for increasing resolution, using coarsened subdomain centroids to compute interpolant.}
\label{fig:lindarcy}
\end{figure}

\noindent\textbf{Darcy problem 2 (D2):} We take $f=0$ and $g = <\alpha,0> \cdot <1,0>$ consistent with applying a horizontal flux which scales with the diffusion parameter. This corresponds to a material which becomes more conductive as the current/flux increases. To capture the nonlinear behavior of this problem will require the neural network to learn fluxes which depend upon the magnitude of the potential. Figure \ref{fig:nonlindarcy} provides a summary and discussion of results.

\begin{figure}
\centering
\includegraphics[width=0.79\linewidth]{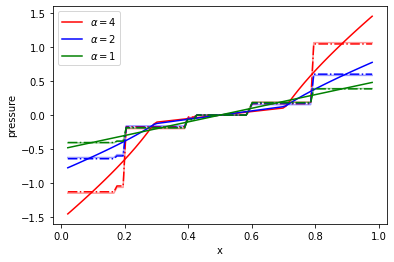}\\
\includegraphics[width=0.79\linewidth]{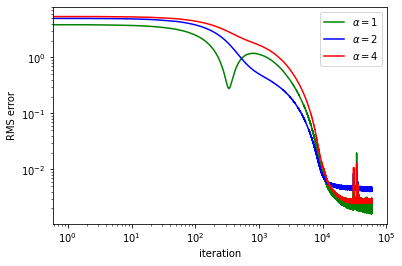}
\caption{Solutions for problem D2. A single nonlinear model is extracted which may be solved for increasing $\alpha$. \textit{Top:} Comparison of pressure along $y=0.5$ for increasing $\alpha$. True solution given by solid line, while learned surrogate is given by dashed line. \textit{Bottom:} Convergence of nonlinear model over three solutions during training.}
\label{fig:nonlindarcy}
\end{figure}

\subsection{Problem 2: Magnetostatics} We consider as training data solutions to the system of equations

\begin{align*}
\nabla \times \mathbf{J} = \mathbf{f} & \qquad \bm{x} \in \Omega \\
\nabla \cdot \mathbf{B} = 0& \qquad \bm{x} \in \Omega\\
\mathbf{B} = \mu_\alpha \mathbf{J}& \qquad \bm{x} \in \Omega\\
\mathbf{B}\cdot \hat{n} = g & \qquad \bm{x} \in \partial \Omega
\end{align*}
obtained via the scheme in \cite{nicolaides1992direct}. For this 2D problem, the magnetic field may be identified with a scalar ($\mathbf{B} = \left<0,0,B(x,y)\right>$). Boundary conditions are imposed upon the 0-cochains encoding the fluxes of $\mathbf{B}$ through subdomain boundaries, and the 1-cochains encoding the magnetic potential on the boundary are fixed to a value of zero. For this problem we will consider only the nonlinear generalization of D2, taking $f=0$ and $g = \alpha$ to obtain a nonlinear material whose permittivity increases with the magnitude of applied magnetic field (Figure \ref{fig:nonlinmagneto}. For this problem, we take as neural network a parametric ReLU activation and a single hidden layer of width ten. In the same manner as the Darcy problem, the resulting model is able to recover a range of $\alpha$'s and the resulting discontinuity in the magnetic field.

\begin{figure}
\centering
\includegraphics[width=0.95\linewidth]{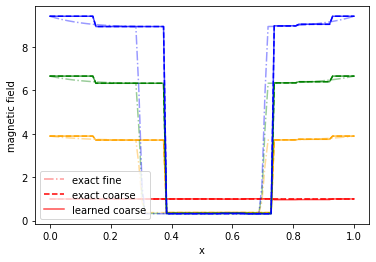}
\caption{Profile of magnetostatics solution along $y=0.5$ line for $5^2$ partitions. The trained model reproduces the coarsened moments of the problem and avoid introduction of any spurious oscillations at the material interface.}
\label{fig:nonlinmagneto}
\end{figure}

\section{Conclusions}

We have presented a new data-driven exterior calculus which parameterizes the classical graph calculus. The resulting framework allows training of discrete exterior calculus operators which incorporates metric information from data while preserving exact sequence structure and invertibility of Hodge Laplacians - both of which are important for handling a range of physical systems. Analysis of the resulting system shows that when nonlinear perturbations are applied to a class of second-order elliptic problems the system remains well-posed under general conditions, allowing the machine learning of nonlinear elliptic systems. Numerical results demonstrate the practical aspects of the approach.

For the sake of introductory exposition, we have restricted ourselves to elementary elliptic problems in the current work. The DDEC framework however may be applied to a broad range of more sophisticated problems; for example, we are currently using it to discover surrogates for semiconductor physics governed by nonlinear drift-diffusion equations (using \cite{charon2020}), and where the resulting network model can be embedded within circuit simulators such as Xyce \cite{gao2020physics,hutchinson2002xyce}. 

Abstractly, the DDEC framework provides a structure-preserving means of parameterizing Dirichlet-to-Neumann maps \cite{sylvester1990dirichlet} governing multiscale physics, generalizing previous works restricted to resistor networks \cite{curtis1991dirichlet} while supporting machine learning tasks. In future work we will provide details regarding how this may be incorporated into a workflow to develop provably stable multiscale models that preserve structure at both fine and coarse scales.

\section*{Acknowledgement}
Sandia National Laboratories is a multimission laboratory managed and operated by National Technology and Engineering Solutions of Sandia, LLC, a wholly owned subsidiary of Honeywell International, Inc., for the U.S. Department of Energy’s National Nuclear Security Administration under contract DE-NA0003525. This paper describes objective technical results and analysis. Any subjective views or opinions that might be expressed in the paper do not necessarily represent the views of the U.S. Department of Energy or the United States Government.

N. Trask has also been supported by the U.S. Department of Energy, Office of Advanced Scientific Computing Research under the Collaboratory on Mathematics and Physics-Informed Learning Machines for Multiscale and Multiphysics Problems (PhILMs) project and the U.S. Department of Energy, Office of Advanced Scientific Computing Research under the Early Career Research Program. A. Huang has been supported under the Sandia National Laboratories Laboratory Directed Research and Development (LDRD) program. The work of X. Hu is partially supported by the National Science Foundation under grant DMS-1812503 and CCF-1934553.

SAND Number: SAND2020-14261 O

\bibliographystyle{plain}
\bibliography{references.bib}

\end{document}